\def\of{\phi}
\def\oL{L_{\phi}}
\newcommand{\bvec}[1]{\mbox{\boldmath $#1$}}
\newcommand{\prox}{{\mbox{\boldmath $\rm prox$}}}
\newcommand{\proj}{{\mbox{\boldmath $\rm proj$}}}
\newcommand{\sign}{{\rm sign}}
\newcommand{\dom}{\mathrm{dom}}
\newcommand{\argmin}{\operatornamewithlimits{argmin}}
\newcommand{\knorm}[1]{|\!|\!|#1|\!|\!|}
\newtheorem{thm}{Theorem}
\newtheorem{proposition}{Proposition}
\newtheorem{examp}{Example}
\newtheorem{lemma}{Lemma}
\newtheorem{assumption}{Assumption}
\newtheorem{rem}{Remark}
\newcommand{\COMM}[2]{{
\ifthenelse{\equal{#1}{KT}}{\color{blue}}{
\ifthenelse{\equal{#1}{AT}}{\color{red}}{
\ifthenelse{\equal{#1}{JG}}{\color{magenta}}}}
[#1: #2]
}}
\begin{document}

\title{Efficient DC Algorithm for Constrained Sparse Optimization}
\author{Katsuya TONO\\
	\\
        Department of Mathematical Informatics,\\
        The University of Tokyo\\
        \texttt{\normalsize katsuya\_tono@mist.i.u-tokyo.ac.jp}\\
        \\
        Akiko TAKEDA\\
        \\
        Department of Mathematical Analysis and Statistical Inference,\\
              The Institute of Statistical Mathematics\\
        \texttt{\normalsize atakeda@ism.ac.jp}\\
        \\
        Jun-ya GOTOH\\
	\\
	Department of Industrial and Systems Engineering,\\
	Chuo University\\
	\texttt{\normalsize jgoto@indsys.chuo-u.ac.jp}}
	\date{January 30, 2017}
\maketitle

\begin{abstract}
We address the minimization of a smooth objective function under an $\ell_0$-constraint and simple convex constraints.
When the problem has no constraints except 
the $\ell_0$-constraint,
some 
efficient algorithms 
are available; for example, Proximal DC (Difference of Convex functions) Algorithm (PDCA) 
repeatedly 
evaluates closed-form solutions of convex subproblems, 
leading to a stationary point of the $\ell_0$-constrained problem. 
However, when the problem has additional convex constraints, they become inefficient
because it is difficult to obtain closed-form solutions of the 
associated subproblems. 
In this paper, we reformulate the problem 
by employing 
a new DC representation of the $\ell_0$-constraint, 
so that 
PDCA can 
retain the efficiency 
by reducing its subproblems to the projection operation onto a convex set. 
Moreover, inspired by the Nesterov's acceleration technique 
for proximal methods, we propose the Accelerated PDCA (APDCA), which 
attains the optimal convergence rate if applied to convex programs, and performs well in numerical
experiments. 
\end{abstract}

\section{Introduction}\label{introduction}
\subsection{Background}

In recent years, sparse optimization problems which include the $\ell_{0}$-norm of decision vector in their objectives or constraints have drawn significant attentions in many applications
 such as signal processing, bioinformatics, and machine learning.
Since such problems are intractable due to the nonconvexity and discontinuity of the $\ell_{0}$-norm~\cite{natarajan1995sparse}, many approaches  have been proposed to approximate the $\ell_{0}$-norm.
The $\ell_{1}$-norm regularization, initiated by Tibshirani~\cite{tibshirani1996regression} for linear regression, has been at the center of sparse optimization.
However, the $\ell_{1}$-regularizer 
does not always capture the true relevant variables since it can be a loose relaxation of the $\ell_{0}$-norm~\cite{candes2008enhancing}.
To overcome this drawback, many regularizers which abandon the convexity have been proposed to 
approximate the $\ell_{0}$-norm in better ways.
Typical examples are Smoothly Clipped Absolute Derivation (SCAD)~\cite{fan2001variable}, Log-Sum Penalty (LSP)~\cite{candes2008enhancing}, Minimax Concave Penalty (MCP)~\cite{zhang2010nearly}, and capped-$\ell_{1}$ penalty~\cite{zhang2010analysis}.

On the other hand, there are some approaches which do not approximate the $\ell_0$-norm;
DC (Difference of Convex functions) optimization approaches, employed in~\cite{thiao2010dc, gotoh2015dc},
 replace the $\ell_0$-norm by a difference of two convex functions and then apply the DC algorithm (DCA)~\cite{pham1997convex} (also known as Convex-ConCave Procedure (CCCP)~\cite{yuille2003concave} or the Multi-Stage (MS) convex relaxation~\cite{zhang2010analysis}) to the resulting DC program.
However, as some papers including \cite{gong2013general,luo2015new} pointed out, DCA requires solving a sequence of convex subproblems, often resulting in a large computation time.

When the problem has no additional constraints other than the $\ell_0$-norm constraint, this issue can be resolved; 
some algorithms whose subproblems have closed-form solutions have been proposed.
Gotoh et al.~\cite{gotoh2015dc}
transformed the problem 
without convex constraints into an equivalent problem minimizing a DC objective function;
a special DC decomposition 
is employed 
so that its subproblems can be reduced to the so-called soft-thresholding operations, which can be carried out in linear time. 
The resulting DCA is called the Proximal DC Algorithm (PDCA), which constitutes a special case of the framework of Sequential Convex Programming (SCP)~\cite{lu2012sequential}.
Iterative Hard-Thresholding (IHT) algorithm~\cite{bertsimas2016best} is another efficient method for the $\ell_0$-constrained optimization. 
In IHT algorithm, we repeat solving subproblems of minimizing a quadratic surrogate function under the $\ell_0$-norm constraint, whose solutions are simply obtained by the so-called hard-thresholding operation.

Some applications of sparse optimization have convex constraints 
such as the $\ell_2$-norm constraint and nonnegative constraint other than the $\ell_0$-norm constraint.
For such constrained sparse optimization,
all the algorithms mentioned above 
generate a sequence of convex subproblems whose closed-form solutions cannot be readily available in general. 
To overcome this issue, we propose a new DC representation of the $\ell_0$-constraint,
which leads to a PDCA whose subproblems have closed-form solutions.


\subsection{Contributions}
We propose an efficient approach to the constrained sparse optimization: the minimization of an objective function under the $\ell_0$-constraint and some convex constraints.
Gotoh et al.~\cite{gotoh2015dc}
proposed to express the $\ell_0$-norm as a difference of two convex functions as $\of_1-\of_2$, both of which are nonsmooth.
However, in applying PDCA to such a constrained problem, the nonsmoothness of the first term $\of_1$ collides with the convex constraints, resulting in making the subproblems difficult to have closed-form solutions.
In this paper, we rewrite the $\ell_0$-norm constraint as 
another DC function so that the former convex function $\of_1$ is smooth.
In applying PDCA, the smoothness of the former term makes subproblems easily 
solvable by a projection operation
onto the convex set.

To achieve faster convergence, we further propose the Accelerated version of PDCA (APDCA), inspired by the preceding work~\cite{li2015accelerated} on extending the Accelerated Proximal Gradient (APG) method (originally for convex program) to nonconvex program.
We construct APDCA by employing techniques used in the nonmonotone APG~\cite{li2015accelerated} for nonconvex program,
so the convergence results for the nonmonotone APG can be shown to hold for APDCA; (i) APDCA has the convergence rate of $\mathrm{O}(1/t^2)$, if applied to convex program, where $t$ denotes the iteration counter, (ii) APDCA has the subsequential convergence to a stationary point.

In the numerical section, we demonstrate the numerical performance of our approach compared to the existing DC optimization approaches.
The efficiency of APDCA applied to our reformulation is confirmed 
with three typical examples of the constrained sparse optimization, using both synthetic and real-world data.


The remainder of this paper is 
structured as follows.
In Section~\ref{sec:preliminaries}, we define the constrained sparse optimization problem and review some existing approaches.
In Section~\ref{sec:representation}, we propose a DC 
representation of the $\ell_0$-norm constraint and then show how to apply PDCA to the transformed problem.
In Section~\ref{sec:link}, we show 
a close relation between PDCA and Proximal Gradient Method (PGM), which 
motivates us to extend the framework of PDCA. 
In Section~\ref{sec:acceleration}, we accelerate PDCA to achieve faster convergence and review some related algorithms.
In Section~\ref{sec:experiment}, we demonstrate the efficiency of our methods 
in comparison with other DCA frameworks.


\section{Preliminaries}\label{sec:preliminaries}
\subsection{Problem settings}
In this paper, we address the following $\ell_0$-constrained problem:
\begin{align}
\min_{\bm{x}}\big\{\,\of(\bm{x})\,:\,\|\bm{x}\|_0\leq k,~\bm{x}\in C\,\big\},\label{eq:l0const}
\end{align}
where $\of:\mathbb R^{n}\to\mathbb R$, $k\in \{1,\ldots, n\}$, $\|
\bm{x}\|_0$ denotes the number of nonzero elements (called the $\ell_0$-norm or the cardinality) of a vector $\bm{x}$, and $C\subseteq \mathbb R^n$ is a nonempty closed convex set. 
A solution $\bm{x}$ satisfying the $\ell_0$-constraint, $\|\bm{x}\|_0\leq k$, is said to be {\it $k$-sparse}. 

Throughout the paper, we make the following assumptions.

\begin{assumption}\label{assump}
\begin{enumerate}[{(}a{)}]
\item $\of
$ is continuously differentiable with $\oL$-Lipschitz continuous gradient, i.e., there exists a constant $\oL$ such that
\begin{align*}
\|\nabla \of(\bvec x)-\nabla \of(\bvec y)\|_2\le \oL\|\bvec x-\bvec y\|_2\quad (\bvec x,\bvec y\in\mathbb R^{n}),
\end{align*}
where $\|\bm{x}\|_2$ denotes the $\ell_2$-norm of $\bm{x}$.

\item  The projection $\proj_C(\bvec u)$ of a point $\bvec u\in\mathbb R^n$ onto $C$ 
can be evaluated efficiently:
\begin{align*}
\proj_C(\bvec u):=\argmin_{\bvec x\in C}\left\{ \frac{1}{2}\|\bvec x-\bvec u\|_2^2 \right\}.
\end{align*}

\item $\phi(\bvec x)+I_C(\bvec x)$ is bounded from below and coercive, i.e., $\phi(\bvec x)+I_C(\bvec x)\to \infty$ as $\|\bvec x\|_2\to \infty$, where $I_C$ denotes the indicator function of $C$ defined as
\begin{align*}
I_C(\bvec x):=\begin{cases}
0,&(\bvec x\in C),\\
+\infty,&(\bvec x\notin C).
\end{cases}
\end{align*}

\item The feasible region $\{\bvec x\in C:\|\bvec x\|_0\le k\}$ of~\eqref{eq:l0const} is nonempty.

\end{enumerate}
\end{assumption}
Various problems in many application areas are formulated as~\eqref{eq:l0const}.
\begin{examp}[sparse principal component analysis~\cite{thiao2010dc}]\label{ex:sparsePCA}
Let $\bvec V \in \mathbb R^{n\times n}$ be a covariance matrix.
When
\begin{align*}
\of(\bvec x)=-\bvec x^\top V\bvec x,\ C=\{\bvec x\in \mathbb R^n: \|\bvec x\|_2\le 1\},
\end{align*}
problem~\eqref{eq:l0const} is called the sparse Principal Component Analysis (PCA).
In sparse PCA, we seek a $k$-sparse vector that approximates the 
eigenvector which corresponds to the largest eigenvalue and regard it as the first principal component.

\end{examp}
\begin{examp}[sparse portfolio selection]\label{ex:sparseportfolio} 
Let $\bvec V\in \mathbb R^{n\times n}$ be a covariance matrix, $\bvec r\in \mathbb R^n$ a mean 
vector of returns of investable assets, and $\alpha>0$ a risk-aversion parameter.
When
\begin{align*}
\of(\bvec x)=\alpha\bvec x^\top\bvec V\bvec x-\bvec r^\top\bvec x,\ C=\{\bvec x\in \mathbb R^n: \bvec 1^\top \bvec x=1\},
\end{align*}
where $\bvec 1\in\mathbb R^n$ denotes the all-one vector, 
problem~\eqref{eq:l0const} can be seen as a variant of the sparse portfolio selection (e.g., \cite{gulpinar2010robust,takeda2013simultaneous}).
\end{examp}

\begin{examp}[sparse nonnegative linear regression]\label{ex:sparsennreg}
Let $\bvec A\in \mathbb R^{m\times n}$, $\bvec b\in\mathbb R^m$, and $I\subseteq\{1,\ldots,n\}$.
When
\begin{align*}
\of(\bvec x)=\frac{1}{2}\|\bvec A\bvec x-\bvec b\|_2^2,\ C=\{\bvec x\in \mathbb R^n: x_i\ge 0\quad (i\in I)\},
\end{align*}
problem~\eqref{eq:l0const} is the sparse nonnegative linear regression problem.
This 
problem includes the following problems as special cases: the ordinary least squares problem with variable selection ($I=\emptyset$) 
and the sparse least squares problem with all variables nonnegative ($I=\{1,\ldots,n\}$)~\cite{slawski2013non}.
\end{examp}

\subsection{Existing approaches to $\ell_0$-constrained optimization}
\subsubsection{
Case for general $\ell_0$-constrained optimization}
Gotoh et al.~\cite{gotoh2015dc} proposed to express the $\ell_0$-norm constraint as a DC function: 
\begin{align*}
\|\bvec x\|_0\le k\iff \|\bvec x\|_1-\knorm{\bvec x}_{k,1}=0,  
\end{align*}
where $\knorm{\bvec x}_{k,1}$, which we call top-$(k,1)$ norm, denotes the $\ell_1$-norm of a subvector composed of top-$k$ elements in absolute value. 
Precisely, 
\begin{align}
\knorm{\bvec x}_{k,1}:=|x_{\pi(1)}|+\cdots+|x_{\pi(k)}|,  \label{largeKL1norm}
\end{align}
where $\pi$ is an arbitrary permutation of $\{1,\ldots, n\}$ 
such that $|x_{\pi(1)}|\ge \cdots \ge |x_{\pi(n)}|$. Namely, $x_{\pi(i)}$ denotes the $i$-th largest element of $\bvec x$ in absolute value.

Then \cite{gotoh2015dc} considered the following penalized problem associated with~\eqref{eq:l0const}:
\begin{align}
\min_{\bm{x}\in C}\left\{\of(\bvec x)+\rho(\|\bvec x\|_1-\knorm{\bvec x}_{k,1})\right\}.\label{eq:l1penalty}
\end{align}
and gave an exact penalty parameter under which 
problems \eqref{eq:l0const} and \eqref{eq:l1penalty} are equivalent
for some examples, e.g.,  $C$ is $\mathbb R^n$ and $\{\bvec x\in\mathbb R^n:\|\bvec x\|_2\le 1\}$.
Problems \eqref{eq:l0const} and \eqref{eq:l1penalty} are equivalent.
Then the so-called DC Algorithm (DCA) is applied to the reformulation~\eqref{eq:l1penalty}. 
In general, 
to minimize a DC function $\of_1(\bvec x)-\of_2(\bvec x)$, expressed by two convex functions $\of_1$ and $\of_2$, 
DCA 
solves the following subproblem repeatedly:
\begin{align}
\bvec x^{(t+1)}\in \argmin_{
\bm{x}\in \mathbb R^n}\left\{ \of_1(\bvec x)-\bvec x^\top \bvec s(\bvec x^{(t)}) \right\},\label{eq:DCAsubprob}
\end{align}
where $\bvec s(\bvec x^{(t)})$ is a subgradient of $\of_2(\bvec x^{(t)})$ at $\bvec x^{(t)}$, i.e.,
\begin{align*}
\bvec s(\bvec x^{(t)})\in\partial \of_2(\bvec x^{(t)}):=\{\bvec y\in\mathbb R^n:\of_2(\bvec x)\ge \of_2(\bvec x^{(t)})+\langle \bvec y,\bvec x-\bvec x^{(t)} \rangle \quad(\bvec x\in\mathbb R^n) \}.
\end{align*}
When applying DCA to~\eqref{eq:l1penalty}, 
\cite{gotoh2015dc} used the following decomposition for a DC function $\of=\gamma-\iota$:
\begin{align}
  &\of_1(\bvec x)=\gamma(\bvec x)+\rho\|\bvec x\|_1+I_C(\bvec x),\quad \of_2(\bvec x)=\iota(\bvec x)+\rho\knorm{\bvec x}_{k,1}.
  \label{eq:naiveDCdecomp}
\end{align}

The resulting subproblem~\eqref{eq:DCAsubprob} is a convex problem, but
because it generally does not have a closed-form solution for \eqref{eq:DCAsubprob},
we need to repeatedly apply some convex optimization algorithm to solve the convex problem, which is 
often time-consuming.

Thiao et al. \cite{thiao2010dc} gave another DC formulation, which is based on Mixed Integer Programming (MIP).
They first rewrote the $\ell_0$-norm using a binary vector $\bvec u$ as
\begin{align*}
\|\bvec x\|_0\le k\iff |x_i|\le Mu_i\ (i=1,\ldots,n),\ \bvec 1^\top\bvec u\le k,\ \bvec u\in \{0,1\}^n,
\end{align*}
where $M$ is a so-called big-$M$ constant, which is set to be sufficiently large.
Then using the following equivalence:
\begin{align*}
\bvec u\in \{0,1\}^n\iff \bvec u\in [0,1]^n, (\bvec 1-\bvec u)^\top \bvec u\le 0,
\end{align*}
they finally obtained a penalized DC formulation of~\eqref{eq:l0const}:
\begin{align}
\min_{\bm{x}\in C}\left\{\of(\bvec x)+\rho(\bvec 1-\bvec u)^\top \bvec u:|x_i|\le Mu_i\ (i=1,\ldots,n),\ \bvec 1^\top\bvec u\le k,\ \bvec u\in [0,1]^n\right\},\label{eq:concavepenalty}
\end{align}
which is solved by DCA~\eqref{eq:DCAsubprob}.
While this approach 
was 
originally 
proposed just 
for Example~\ref{ex:sparsePCA}, it works also in our general settings.
We need to use some convex optimization algorithms for the resulting convex subproblem as well as
the above-mentioned DCA of
\cite{gotoh2015dc}. 

\subsubsection{Case for 
$\ell_0$-constrained optimization without other constraints}
 For the case where $C=\mathbb R^n$, 
paper \cite{gotoh2015dc} proposed a different 
DC decomposition, $\of_1-\of_2$, where 
\begin{align}
  &\of_1(\bvec x)=\left(\frac{\oL}{2}\|\bvec x\|_2^2+\rho\|\bvec x\|_1\right),\quad \of_2(\bvec x)=\left(\frac{\oL}{2}\|\bvec x\|_2^2-\of(\bvec x)+\rho\knorm{\bvec x}_{k,1}\right). 
  \label{eq:l1proxDCdecomp}
\end{align}
The DC decomposition~\eqref{eq:l1proxDCdecomp} gives a closed-form solution for the subproblem \eqref{eq:DCAsubprob}.
We call the resulting algorithm the Proximal DC Algorithm (PDCA).
The subproblem~\eqref{eq:DCAsubprob} of PDCA is written as
\begin{align}
\bvec x^{(t+1)}&\in \argmin_{\bvec x\in \mathbb R^n}\left\{ \frac{\oL}{2}\|\bvec x\|_2^2+\rho\|\bvec x\|_1-\bvec x^\top \left(\oL\bvec x^{(t)}-\nabla \of(\bvec x^{(t)})+\bvec s(\bvec x^{(t)}) \right)\right\},\label{eq:l1PDCAsubprob}
\end{align}
where $\bvec s(\bvec x^{(t)})\in\partial \rho\knorm{\bvec x^{(t)}}_{k,1}$.
By using the proximal operator notation:
\begin{align}
\prox_{g}(\bvec u):=\argmin_{\bvec x}\left\{ g(\bvec x)+\frac{1}{2}\|\bvec x-\bvec u\|^{2} \right\},\label{eq:proxoper}
\end{align}
we can further rewrite the subproblem~\eqref{eq:l1PDCAsubprob} as
\begin{align*}
\bvec x^{(t+1)}&\in\argmin_{\bvec x\in \mathbb R^n}\left\{\frac{\rho}{\oL}\|\bvec x\|_1+ \frac{1}{2}\left\|\bvec x-\left(\bvec x^{(t)}-\frac{1}{\oL}\nabla \of(\bvec x^{(t)})+\frac{1}{\oL}\bvec s(\bvec x^{(t)})\right)\right\|_2^2\right\}\nonumber\\
&=\prox_{\frac{\rho}{\oL}\|\cdot\|_1}\left(\bvec x^{(t)}-\frac{1}{\oL}\nabla \of(\bvec x^{(t)})+\frac{1}{\oL}\bvec s(\bvec x^{(t)}) \right),
\end{align*}
which is easily computed by using the so-called soft-thresholding~\cite{donoho1994ideal}, whose element is given as
\begin{align}
[\prox_{\frac{\rho}{\oL}\|\cdot\|_{1}}(\bvec u)]_{i}=\sign(u_{i})\max\{u_{i}-\rho/\oL,0\},
\label{eq:softthreshold}
\end{align}
where $\sign(u)=1$ if $u>0$; $-1$ if $u<0$; $0$, otherwise.


Bertsimas et al. \cite{bertsimas2016best} addresses \eqref{eq:l0const} without replacing the $\ell_0$-constraint by other terms. 
Since the function $\of
$ has a quadratic majorant at each point $\bm{x}^{(t)}$ because of its $L_{\phi}$-smoothness, 
the paper proposes to iteratively solve the subproblems:
\begin{align}
\bvec x^{(t+1)}\in \argmin_{\|\bm{x}\|_0\le k}\left\{ \of(\bvec x^{(t)})-(\bvec x-\bvec x^{(t)})^\top \nabla \of(\bvec x^{(t)})+\frac{\oL}{2}\|\bvec x-\bvec x^{(t)}\|_2^2 \right\}.\label{eq:IHTsubprob}
\end{align}
The subproblem is computed by the so-called hard-thresholding operation, so repeating \eqref{eq:IHTsubprob} is called the Iterative Hard-Thresholding (IHT) algorithm.
They showed that the optimal solution $\hat{\bvec x}$ of $\min_{\bm{x}\in\mathbb R^n}\{\|\bvec x-\bvec u\|_2^2:\|\bvec x\|_0\le k\}$ is obtained as follows: $\hat{\bvec x}$ retains the $k$ largest elements in absolute value of $\bvec u$ and sets the rest elements to zero.
Since the hard-thresholding works only when $C=\mathbb R^n$,
IHT algorithm is not 
applicable to \eqref{eq:l0const} with $C\neq \mathbb R^n$.

\section{DC representation for constrained sparse optimization}\label{sec:representation}
\subsection{Main idea}
PDCA with the DC decomposition \eqref{eq:naiveDCdecomp} for~\eqref{eq:l1penalty} can perform poorly even if a simple convex constraint consists of $C$, since its subproblem has no closed-form solutions in general.
To overcome this issue,
we give another equivalent DC representation of the $\ell_0$-constraint.
Let us start with the following equivalence results, which slightly generalize Theorem 1 of \cite{gotoh2015dc}. 
\begin{proposition}\label{prop:3_equiv_representations}
Let $\nu:\mathbb{R}\rightarrow\mathbb{R}_+$ be a nonnegative function such that $\nu(a)=0$ if and only if $a=0$, and 
with 
a permutation $\pi$ of $\{1,\ldots, n\}$, denote by $\nu(x_{\pi(i)})$ the $i$-th largest element of $\nu(x_1),...,\nu(x_n)$, i.e., $\nu(x_{\pi(1)})\geq\cdots\geq\nu(x_{\pi(n)})$. 
 For any integers $k,h$ such that $1\leq{k}<h\leq{n}$, and $\bm{x}\in\mathbb{R}^n$, the following three conditions are equivalent:
\begin{enumerate}
\item $\|\bm{x}\|_0 \leq k$,
\item 
$\sum\limits_{i=1}^{h}\nu(x_{\pi(i)})-\sum\limits_{i=1}^{k}\nu(x_{\pi(i)})=0$, and 
\item $\sum\limits_{i=1}^{n}\nu(x_i)-\sum\limits_{i=1}^{k}\nu(x_{\pi(i)})=0$.
\end{enumerate}
 Furthermore, the following three conditions are equivalent:
\begin{enumerate}
\setcounter{enumi}{3}
\item $\|\bm{x}\|_0=k$,
\item 
$k=\min\{\kappa\in\{1,...,h-1\}:\sum\limits_{i=1}^{h}\nu(x_{\pi(i)})-\sum\limits_{i=1}^{\kappa}\nu(x_{\pi(i)})=0\}$, and
\item 
$k=\min\{\kappa\in\{1,...,n-1\}:\sum\limits_{i=1}^{n}\nu(x_i)-\sum\limits_{i=1}^{\kappa}\nu(x_{\pi(i)})= 0\}$.
\end{enumerate}
\end{proposition}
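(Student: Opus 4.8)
The plan is to reduce all six statements to one elementary fact about sorted nonnegative sequences. Abbreviate $a_i:=\nu(x_{\pi(i)})$, so that $a_1\ge a_2\ge\cdots\ge a_n\ge 0$. Since $\nu(a)=0$ if and only if $a=0$, we have $a_i>0$ exactly for $i\le\|\bm{x}\|_0$ and $a_i=0$ for $i>\|\bm{x}\|_0$; also $\sum_{i=1}^{n}\nu(x_i)=\sum_{i=1}^{n}a_i$, being a reordering of the same terms. The fact I would isolate is that for integers $0\le j<m\le n$,
\[
\sum_{i=j+1}^{m}a_i=0\iff a_{j+1}=0\iff\|\bm{x}\|_0\le j .
\]
The first equivalence holds because the $a_i$ are nonnegative and nonincreasing, so a consecutive block sums to zero iff its leading (hence largest) term does, which then forces $a_{j+1}=a_{j+2}=\cdots=0$; the second is the positivity pattern just noted.

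First I would dispatch the equivalence of conditions 1--3. Condition 2 reads $\sum_{i=1}^{h}a_i-\sum_{i=1}^{k}a_i=\sum_{i=k+1}^{h}a_i=0$, which by the displayed fact with $j=k$ and $m=h$ (legitimate since $k<h\le n$) is equivalent to $\|\bm{x}\|_0\le k$, i.e.\ condition 1. Condition 3 reads $\sum_{i=1}^{n}a_i-\sum_{i=1}^{k}a_i=\sum_{i=k+1}^{n}a_i=0$, which is the same fact with $m=n$, hence again equivalent to condition 1.

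Next, for conditions 4--6 I would feed the previous step back in with a running parameter $\kappa$. For $1\le\kappa\le h-1$ the equation inside condition 5 is $\sum_{i=\kappa+1}^{h}a_i=0$, equivalent to $\|\bm{x}\|_0\le\kappa$; likewise for $1\le\kappa\le n-1$ the equation inside condition 6 is equivalent to $\|\bm{x}\|_0\le\kappa$. Hence the sets whose minima appear in conditions 5 and 6 are $\{\kappa\in\{1,\dots,h-1\}:\|\bm{x}\|_0\le\kappa\}$ and $\{\kappa\in\{1,\dots,n-1\}:\|\bm{x}\|_0\le\kappa\}$. When $1\le\|\bm{x}\|_0\le h-1$ (resp.\ $\le n-1$) each set is the integer interval from $\|\bm{x}\|_0$ upward, whose minimum is $\|\bm{x}\|_0$, so conditions 5 and 6 each assert $\|\bm{x}\|_0=k$, which is condition 4; and when $\|\bm{x}\|_0\ge h$ the set in condition 5 is empty while condition 4 also fails (since $k\le h-1$), so the equivalence persists, and similarly for condition 6.

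The computations are routine; the one point that deserves care is the monotonicity step --- it is essential that the $a_i$ be sorted in order to pass from ``a consecutive block sums to zero'' to ``every entry past position $k$ vanishes,'' which is precisely what injects the top-$k$ truncation into the statement --- together with, in the second half, the bookkeeping that pins the minimizing $\kappa$ to $\|\bm{x}\|_0$ and the verification that the degenerate ranges of $\|\bm{x}\|_0$ are consistent on both sides of each equivalence.
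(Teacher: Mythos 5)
The paper itself does not supply a proof of this proposition --- it defers to Theorem~1 of Gotoh et al.\ with $|\cdot|$ replaced by $\nu$ --- and your direct argument via the sorted sequence $a_i=\nu(x_{\pi(i)})$ is exactly the natural route that proof takes: a consecutive block of a nonincreasing nonnegative sequence sums to zero iff its leading entry vanishes, and $a_{j+1}=0$ iff $\|\bm{x}\|_0\le j$. Your treatment of the equivalence of conditions 1--3 is complete and correct.

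There is, however, one genuine gap in your handling of conditions 4--6: your case analysis covers $1\le\|\bm{x}\|_0\le h-1$ and $\|\bm{x}\|_0\ge h$ but silently omits $\|\bm{x}\|_0=0$, i.e.\ $\bm{x}=\bm{0}$. That case cannot be absorbed into the others, because there the equivalence you assert actually fails: for $\bm{x}=\bm{0}$ every $\kappa\in\{1,\dots,h-1\}$ satisfies $\sum_{i=\kappa+1}^{h}a_i=0$, so the set in condition 5 is all of $\{1,\dots,h-1\}$ and its minimum is $1$; hence condition 5 reads ``$k=1$'' and is \emph{true} when $k=1$, while condition 4 reads ``$\|\bm{x}\|_0=k$'', i.e.\ ``$0=k$'', which is false since $k\ge 1$ (the same happens for condition 6). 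So the second triple of equivalences holds only under the additional (implicit) hypothesis $\bm{x}\ne\bm{0}$. You should either state that hypothesis explicitly before the second half of your argument and add the line disposing of the degenerate case, or flag that the proposition as written needs it; as the proof stands, the claim that ``the equivalence persists'' in all remaining cases is not justified and is in fact false in the one case you skipped.
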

If we employ the absolute value for $\nu$, i.e., $\nu(a)=|a|$, it is valid that 
\[
\sum\limits_{i=1}^{h}\nu(x_{\pi(i)})=\knorm{
(\nu(x_1),...,\nu(x_n))}_{h,1}
\mbox{ and }
\sum\limits_{i=1}^{n}\nu(x_i)=\|
(\nu(x_1),...,\nu(x_n))\|_1, 
\]
and the above statements result in Theorem 1 of \cite{gotoh2015dc}. 
We can prove Proposition \ref{prop:3_equiv_representations} by just replacing the absolute value with the function $\nu$ in the proof of Theorem 1 of \cite{gotoh2015dc}, 
and thus 
omit the proof here. 

%
With $\nu(a)=a^2$ instead of $|a|$, we can attain a quadratic DC representation. 
To align with the notation of $\knorm{\bvec x}_{k,1}$, 
we denote 
$\knorm{(x_1^2,...,x_n^2)}_{k,1}$ by $\knorm{\bm{x}}_{k,2}^2$.\footnote{In other words, $\knorm{\bm{x}}_{k,2}$ 
equals the $\ell_2$-norm of a subvector composed of top-$k$ elements of $\bvec x$ in 
square value, 
i.e.,
$
\knorm{\bm{x}}_{k,2}
=\sqrt{x_{\pi(1)}^2+\cdots+x_{\pi(k)}^2}
$ with permutation $\pi$ such that $x_{\pi(1)}^2\geq\cdots\geq x_{\pi(n)}^2$. 
Analogously to $\knorm{\bm{x}}_{k,1}$, we may call $\knorm{\bm{x}}_{k,2}^2$ {\it top-$(k,2)$ norm}.}
Based on the equivalence between items 1. and 3. in Proposition \ref{prop:3_equiv_representations}, we have another DC representation of the $\ell_0$-constraint:
\begin{align}
\|\bvec x\|_0\le k\iff \|\bvec x\|_2^2-\knorm{\bvec x}_{k,2}^2=0.
\label{eq:quadratic_DC_const}
\end{align}
Note that $\knorm{\bvec x}_{k,2}^2$ is convex\footnote{More generally, $\knorm{
(\nu(x_1),...,\nu(x_n))}_{k,1}$ 
(or $\sum_{i=1}^{k}\nu(x_{\pi(i)})$) 
is convex if $\nu$ is convex.} since it can be written as a pointwise maximum of convex functions:
\begin{align*}
\knorm{\bvec x}_{k,2}^2=\max_{\bvec v}\left\{\sum_{i=1}^nv_ix_i^2:\bvec v\in\{0,1\}^n, \|\bvec v\|_1=k\right\}.
\end{align*}
With the equivalence 
\eqref{eq:quadratic_DC_const}, 
we consider the penalized problem associated with~\eqref{eq:l0const}:
\begin{align}
\min_{\bm{x}\in C}\left\{\of(\bvec x)+\rho(\|\bvec x\|_2^2-\knorm{\bvec x}_{k,2}^2)\right\},\label{eq:l2penalty}
\end{align}
where $\rho>0$ denotes a penalty parameter.
The next theorem, which can be proved similarly to Theorem 17.1 in~\cite{nocedal2006numerical}, ensures that problem~\eqref{eq:l2penalty} is essentially equivalent to the original problem~\eqref{eq:l0const} if we take the limit of the penalty parameter $\rho$.
\begin{thm}
Let $\{\rho_t\}$ be an increasing sequence with $\lim_{t\to \infty}\rho_t=\infty$ and $\bvec x_{t}$ be an optimal solution of \eqref{eq:l2penalty} with $\rho=\rho_t$.
Then any 
accumulation point $\bvec x^\ast$ of $\{\bvec x_t\}$ is also optimal to \eqref{eq:l0const}.
\end{thm}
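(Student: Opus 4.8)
The plan is to follow the classical penalty-method convergence argument (as in Theorem 17.1 of Nocedal--Wright), adapted to the fact that our penalty term $P(\bvec x):=\|\bvec x\|_2^2-\knorm{\bvec x}_{k,2}^2$ is an \emph{exact} DC representation of the $\ell_0$-constraint, i.e.\ $P(\bvec x)\ge 0$ always (by convexity of $\knorm{\cdot}_{k,2}^2$ and its variational characterization) and $P(\bvec x)=0$ if and only if $\|\bvec x\|_0\le k$, by \eqref{eq:quadratic_DC_const}. First I would fix a feasible point $\bar{\bvec x}$ of the original problem \eqref{eq:l0const} (nonempty by Assumption~\ref{assump}(d)); since $P(\bar{\bvec x})=0$ and $\bar{\bvec x}\in C$, optimality of $\bvec x_t$ for \eqref{eq:l2penalty} with $\rho=\rho_t$ gives the inequality
\begin{align*}
\of(\bvec x_t)+\rho_t P(\bvec x_t)\le \of(\bar{\bvec x})+\rho_t P(\bar{\bvec x})=\of(\bar{\bvec x}).
\end{align*}

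Next I would extract the two consequences of this bound. Since $P(\bvec x_t)\ge 0$, we get $\of(\bvec x_t)\le \of(\bar{\bvec x})$ for all $t$; combined with Assumption~\ref{assump}(c) (coercivity and boundedness below of $\of+I_C$) and the fact that $\bvec x_t\in C$, this shows $\{\bvec x_t\}$ is bounded, so accumulation points exist and every accumulation point $\bvec x^\ast$ lies in the closed set $C$. Then, rearranging, $P(\bvec x_t)\le (\of(\bar{\bvec x})-\of(\bvec x_t))/\rho_t\le (\of(\bar{\bvec x})-\inf(\of+I_C))/\rho_t\to 0$ as $t\to\infty$ because $\rho_t\to\infty$ and the numerator is bounded. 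Passing to a subsequence converging to $\bvec x^\ast$ and using continuity of $P$ (it is a difference of continuous convex functions), we conclude $P(\bvec x^\ast)=0$, hence $\|\bvec x^\ast\|_0\le k$ by \eqref{eq:quadratic_DC_const}; together with $\bvec x^\ast\in C$ this makes $\bvec x^\ast$ feasible for \eqref{eq:l0const}.

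Finally I would establish optimality of $\bvec x^\ast$. Along the same subsequence, continuity of $\of$ gives $\of(\bvec x^\ast)=\lim \of(\bvec x_t)\le \of(\bar{\bvec x})$, and since $\bar{\bvec x}$ was an arbitrary feasible point of \eqref{eq:l0const} (in particular we may take $\bar{\bvec x}$ to be an optimal solution of \eqref{eq:l0const}, which exists because the feasible region is nonempty and closed and $\of+I_C$ is coercive and bounded below), we obtain $\of(\bvec x^\ast)\le \of(\bvec x)$ for every feasible $\bvec x$. Hence $\bvec x^\ast$ is optimal for \eqref{eq:l0const}.

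The only genuinely delicate point is the lower-semicontinuity/closedness bookkeeping: one must be slightly careful that the limit $\bvec x^\ast$ stays in $C$ (which is immediate from closedness of $C$) and that the $\ell_0$-constraint, which is \emph{not} continuous, is nevertheless recovered in the limit --- this works precisely because we do not pass $\|\cdot\|_0$ to the limit directly but rather the continuous surrogate $P$, and the exactness equivalence \eqref{eq:quadratic_DC_const} does the rest. Everything else is the routine squeeze on $\rho_t P(\bvec x_t)$; I do not expect any real obstacle beyond organizing these limits cleanly.
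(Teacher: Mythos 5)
Your proposal is correct and follows essentially the same route as the paper's proof: the classical penalty-method squeeze on $\rho_t\bigl(\|\bvec x_t\|_2^2-\knorm{\bvec x_t}_{k,2}^2\bigr)$ using optimality of $\bvec x_t$ against a fixed optimal $\bar{\bvec x}$, nonnegativity of the penalty, and continuity to recover feasibility and then optimality of the accumulation point. The only additions are your explicit verification that $\{\bvec x_t\}$ is bounded via Assumption~\ref{assump}(c) (the paper simply posits an accumulation point, as the theorem statement does) and the slightly cleaner uniform bound $(\of(\bar{\bvec x})-\inf(\of+I_C))/\rho_t$; these are harmless refinements, not a different argument.
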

\begin{proof}
Let $\bar{\bvec x}$ be an optimal solution of~\eqref{eq:l0const}.
Since $\bvec x_t$ is a minimizer of \eqref{eq:l2penalty} with $\rho=\rho_t$, we have
\begin{align}
\of(\bvec x_t)+\rho_t(\|\bvec x_t\|_2^2-\knorm{\bvec x_t}_{k,2}^2)\le \of(\bar{\bvec x})+\rho_t(\|\bar{\bvec x}\|_2^2-\knorm{\bar{\bvec x}}_{k,2}^2)=\of(\bar{\bvec x}),\label{eq:xt-xbar}
\end{align}
which is transformed into
\begin{align*}
\|\bvec x_t\|_2^2-\knorm{\bvec x_t}_{k,2}^2\le \frac{1}{\rho_t}(\of(\bar{\bvec x})-\of(\bvec x_t)).
\end{align*}
Let $\mathcal T$ be an infinite subsequence such that $\lim_{t\in\mathcal T\to\infty}\bvec x_t=\bvec x^\ast$.
By taking the limit on both sides and considering the nonnegativity of the penalty, we have
\begin{align*}
0\le\|\bvec x^\ast\|_2^2-\knorm{\bvec x^\ast}_{k,2}^2\le \lim_{t\in\mathcal T\to\infty}\frac{1}{\rho_t}(\of(\bar{\bvec x})-\of(\bvec x^\ast))=0,
\end{align*}
which implies $\bvec x^\ast$ is feasible to~\eqref{eq:l0const}.
In addition, by taking the limit on both sides of~\eqref{eq:xt-xbar}, we have
\begin{align*}
\of(\bvec x^\ast)\le \of(\bvec x^\ast)+\lim_{t\to\mathcal T\to\infty}\rho_t(\|\bvec x_t\|_2^2-\knorm{\bvec x_t}_{k,2}^2)\le \of(\bar{\bvec x}).
\end{align*}
Since $\bar{\bvec x}$ is an optimal solution of \eqref{eq:l0const}, $\bvec x^\ast$ is also optimal to \eqref{eq:l0const}.
\end{proof}

As we see in the next subsection, 
the associated subproblems of the specialized PDCA can be efficiently solved 
owing to the smoothness of $\|\bvec x\|_2^2$.

\subsection{Proximal DC algorithm for the transformed problem}
To apply PDCA to~\eqref{eq:l2penalty}, we consider the following DC decomposition:
\begin{align}
 & \of_1(\bvec x)=\frac{\oL}{2}\|\bvec x\|_2^2+\rho\|\bvec x\|_2^2+I_C(\bvec x),
   \nonumber\\ 
   &  \of_2(\bvec x)=
   \frac{\oL}{2}\|\bvec x\|_2^2-\of(\bvec x)+\rho\knorm{\bvec x}_{k,2}^2. 
   \label{eq:l2proxDCdecomp}
\end{align}
Then the corresponding PDCA subproblem becomes
\begin{align}
\bvec x^{(t+1)}&\in\argmin_{\bvec x\in \mathbb R^n}\left\{I_C(\bvec x)+ \frac{\oL+2\rho}{2}\left\|\bvec x-\frac{1}{\oL+2\rho}\left(\oL\bvec x^{(t)}-\nabla \of(\bvec x^{(t)})+\bvec s^{(t)}\right)\right\|_2^2\right\}\nonumber\\
&=\prox_{\frac{I_C}{\oL+2\rho}}\left(\frac{1}{\oL+2\rho}\left(\oL\bvec x^{(t)}-\nabla \of(\bvec x^{(t)})+\bvec s(\bvec x^{(t)})\right) \right),
\label{eq:l2PDCAsubprob}
\end{align}
where $\bvec s^{(t)}\in \partial (\rho\knorm{\bvec x}_{k,2}^2)$.
The subdifferential of 
$\knorm{\bm{x}}_{k,2}^2$ is given as
\begin{align*}
\partial(\knorm{\bvec x}_{k,2}^2)=\left\{ \bvec v: v_i=
\begin{cases}
2x_i &(\pi(i)\le k)\\
0 & (\pi(i)>k)
\end{cases}
\right\}.
\end{align*}
Note that the proximal operator of $I_C$ is nothing but the projection onto $C$.
Therefore, the subproblem~\eqref{eq:l2PDCAsubprob} is easily solved for various feasible sets $C$.
We list below how to obtain $\proj_C$ for the three constraint sets in Examples~\ref{ex:sparsePCA}--\ref{ex:sparsennreg}.
\begin{enumerate}[{(}i{)}]
\item For $C=\{\bvec x\in\mathbb R^n:\|\bvec x\|_2\le 1\}$, $\proj_C$ is given by
\begin{align*}
\proj_C(\bvec u)=
\begin{cases}
\frac{\bvec u}{\|\bvec u\|_2}&(\|\bvec u\|_2\ge 1),\\
\bvec u&(\|\bvec u\|_2<1).
\end{cases}
\end{align*}
\item
For $C=\{\bvec x\in\mathbb R^n:\bvec 1^\top \bvec x =1\}$, $\proj_C$ is given by
\begin{align*}
\proj_C(\bvec u)=\bvec u+(1-\bvec 1^\top \bvec u)\bvec 1/n.
\end{align*}
\item
For $C=\{\bvec x\in\mathbb R^n: x_i\ge 0\ (i\in I\subseteq \{1,\ldots,n\})\}$, $\proj_C(\bvec u)$ is obtained by setting the negative elements of $\bvec u$ corresponding to $I$ to zero and retaining the rest.
\end{enumerate}

We summarize the procedure of PDCA for the transformed problem~\eqref{eq:l2penalty} in Algorithm~\ref{alg:PDCA}.
For practical use, the termination criterion of Algorithm~\ref{alg:PDCA} is replaced by
$\Phi(\bvec x^{(t)}) - \Phi(\bvec x^{(t+1)}) < \varepsilon$, where $\Phi$ denotes the objective function in~\eqref{eq:l2penalty} and $\varepsilon$ is a sufficiently small positive value.
As we mentioned in Section~\ref{sec:preliminaries}, Algorithm~\ref{alg:PDCA} is a kind of DCA with the special DC decomposition.
Since the global convergence of DCA is shown in \cite{pham1997convex} for a general problem setting including~\eqref{eq:l2penalty}, the convergence property is also valid for Algorithm~\ref{alg:PDCA}.
\begin{thm}
Let $\{\bvec x^{(t)}\}$ be the sequence generated by Algorithm~\ref{alg:PDCA}. Then $\{\bvec x^{(t)}\}$ globally converges to a stationary point $\bvec x^\ast\in C$ of \eqref{eq:l2penalty}, i.e.,
\begin{align*}
\bvec 0\in \partial \phi_1(\bvec x^\ast)-\partial\phi_2(\bvec x^\ast),
\end{align*}
where $\phi_1(\bvec x)$ and $\phi_2(\bvec x)$ are given by~\eqref{eq:l2proxDCdecomp}.
\end{thm}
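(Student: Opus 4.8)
The plan is to invoke the established convergence theory for DCA (Pham Dinh and Le Thi, \cite{pham1997convex}) and verify that the DC decomposition \eqref{eq:l2proxDCdecomp} satisfies the hypotheses under which that theory guarantees subsequential (indeed, as claimed, ``global'' in the sense of whole-sequence convergence of objective values and cluster-point stationarity) convergence. Concretely, I would first record that $\phi_1$ and $\phi_2$ as defined in \eqref{eq:l2proxDCdecomp} are proper closed convex functions: $\phi_1$ is a sum of the smooth strongly convex quadratic $\tfrac{\oL+2\rho}{2}\|\bvec x\|_2^2$ and the indicator $I_C$ of a nonempty closed convex set (hence proper closed convex), and $\phi_2$ is a sum of the quadratic $\tfrac{\oL}{2}\|\bvec x\|_2^2$, the concave-up term $-\of(\bvec x)$ which is convex precisely because $\nabla\of$ is $\oL$-Lipschitz (so $\tfrac{\oL}{2}\|\cdot\|_2^2-\of$ is convex), and the convex top-$(k,2)$ norm $\rho\knorm{\bvec x}_{k,2}^2$ whose convexity was noted via the pointwise-maximum representation just before \eqref{eq:quadratic_DC_const}. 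Thus $\Phi = \phi_1 - \phi_2$ is a genuine DC function and the iteration \eqref{eq:l2PDCAsubprob} is exactly the DCA update \eqref{eq:DCAsubprob}, which is well-defined and single-valued here because $\phi_1$ is strongly convex so the subproblem has a unique minimizer.

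Next I would check the coercivity/boundedness conditions that DCA's convergence analysis requires, and these come directly from Assumption~\ref{assump}. By Assumption~\ref{assump}(c), $\of + I_C$ is bounded below and coercive; adding the nonnegative penalty $\rho(\|\bvec x\|_2^2-\knorm{\bvec x}_{k,2}^2)\ge 0$ preserves both properties, so $\Phi$ is bounded below and coercive on $\mathbb R^n$. Consequently the sequence $\{\Phi(\bvec x^{(t)})\}$ is nonincreasing (the standard DCA descent lemma: $\Phi(\bvec x^{(t)})-\Phi(\bvec x^{(t+1)})\ge \tfrac{\rho_1+\rho_2}{2}\|\bvec x^{(t+1)}-\bvec x^{(t)}\|_2^2$ where $\rho_1,\rho_2$ are the strong-convexity moduli of $\phi_1,\phi_2$, here at least $\oL+2\rho$ and $\oL$ respectively) and bounded below, hence convergent; coercivity of $\Phi$ together with $\Phi(\bvec x^{(t)})\le\Phi(\bvec x^{(0)})$ forces $\{\bvec x^{(t)}\}$ to lie in a compact set, so it has accumulation points. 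Summability of $\sum_t\|\bvec x^{(t+1)}-\bvec x^{(t)}\|_2^2$ follows from telescoping the descent inequality, which gives $\|\bvec x^{(t+1)}-\bvec x^{(t)}\|_2\to 0$.

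Then I would pass to the limit along a convergent subsequence $\bvec x^{(t_j)}\to\bvec x^\ast$. From the optimality condition of the subproblem \eqref{eq:l2PDCAsubprob}, $\bvec s(\bvec x^{(t_j)})\in\partial\phi_1(\bvec x^{(t_j+1)})$, equivalently $\bvec 0\in\partial\phi_1(\bvec x^{(t_j+1)})-\bvec s(\bvec x^{(t_j)})$ with $\bvec s(\bvec x^{(t_j)})\in\partial\phi_2(\bvec x^{(t_j)})$. Using $\|\bvec x^{(t_j+1)}-\bvec x^{(t_j)}\|_2\to 0$ (so $\bvec x^{(t_j+1)}\to\bvec x^\ast$ as well), boundedness of the subgradients $\bvec s(\bvec x^{(t_j)})$ on the compact sublevel set (they have an explicit bounded form, $\|\bvec s\|_2\le 2\rho\|\bvec x^{(t_j)}\|_2$), and the closedness of the graphs of $\partial\phi_1$ and $\partial\phi_2$ as maximal monotone operators, I would extract a further subsequence along which $\bvec s(\bvec x^{(t_j)})\to\bvec s^\ast$ and conclude $\bvec s^\ast\in\partial\phi_1(\bvec x^\ast)\cap\partial\phi_2(\bvec x^\ast)$, i.e. $\bvec 0\in\partial\phi_1(\bvec x^\ast)-\partial\phi_2(\bvec x^\ast)$, which is the asserted stationarity. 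For the stronger whole-sequence (``global'') convergence claim, I would either cite it directly from \cite{pham1997convex} under the present hypotheses, or, if a self-contained argument is wanted, invoke a Kurdyka--\L ojasiewicz argument: $\Phi$ is semialgebraic (polynomials, indicator of a basic semialgebraic $C$ in the examples, and the top-$(k,2)$ norm is a max of polynomials), so it is a KL function, and the standard KL-based template (sufficient decrease + relative-error bound $\mathrm{dist}(\bvec 0,\partial\Phi(\bvec x^{(t+1)}))\le c\|\bvec x^{(t+1)}-\bvec x^{(t)}\|_2$ + continuity along the sequence) upgrades subsequential convergence to convergence of the entire sequence. The main obstacle is this last point: pinning down in precisely what sense ``globally converges'' is meant and supplying the matching hypotheses — in particular the relative-error bound requires care because $\Phi$ is nonsmooth and the subgradient of $\phi_2$ used in the update is only one selection from $\partial\phi_2$, so one must either lean on the cited DCA result as a black box or verify the KL machinery applies to this specific nonsmooth DC structure.
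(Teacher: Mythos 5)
Your proposal follows essentially the same route as the paper, which establishes this theorem simply by observing that Algorithm~\ref{alg:PDCA} is DCA applied to the valid DC decomposition \eqref{eq:l2proxDCdecomp} and then citing the general convergence theory of \cite{pham1997convex}; your verification of the convexity of $\phi_1$ and $\phi_2$, the coercivity inherited from Assumption~\ref{assump}(c) plus the nonnegative penalty, and the limiting argument via closedness of the subdifferential graphs fills in exactly the details the paper delegates to that reference. The only quibble is your parenthetical claim that $\phi_2$ has strong-convexity modulus at least $\oL$ — Lipschitz continuity of $\nabla\of$ only yields convexity (not strong convexity) of $\tfrac{\oL}{2}\|\cdot\|_2^2-\of$, and $\knorm{\cdot}_{k,2}^2$ is not strongly convex for $k<n$ — but this is harmless since $\rho_1+\rho_2\ge\oL+2\rho>0$ already suffices for the DCA descent inequality.
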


\begin{algorithm}\caption{Proximal DC Algorithm (PDCA) for~\eqref{eq:l2penalty}}\label{alg:PDCA}
\begin{algorithmic}
\STATE $\bvec x^{(0)}\in C$.
\FOR{$t=0,1,\ldots$}
\STATE Pick a subgradient $\bvec s(\bvec x^{(t)})\in\partial g_2(\bvec x^{(t)})$ and compute
\STATE $\bvec x^{(t+1)}=\proj_{C}\left(\frac{1}{\oL+2\rho}\left(\oL\bvec x^{(t)}-\nabla \of(\bvec x^{(t)})+\bvec s(\bvec x^{(t)})\right) \right)$.
\ENDFOR
\end{algorithmic}
\end{algorithm}


\section{PDCA with backtracking step size rule}\label{sec:link}
In this section, we show 
a link between the Proximal DC Algorithm (PDCA) and 
the Proximal Gradient Method (PGM), 
and present a 
PDCA with backtracking step size rule. 
More specifically, we first discuss that the framework of PDCA can be extended to more general settings.
Then we clarify that PDCA is a generalized version of PGM for DC optimization, which implies that some useful techniques to speed up PGM can 
also be employed in PDCA. 

\subsection{Proximal DC algorithm for composite nonconvex optimization}
We consider the following composite nonconvex optimization problem:
\begin{align}
\min_{\bm{x}\in\mathbb R^{n}}F(\bvec x):=f(\bvec x)+g(\bvec x),\label{eq:DCreg}
\end{align}
where $f,g:\mathbb R^{n}\to\mathbb R\cup \{+\infty\}$.
We make the following assumptions on~\eqref{eq:DCreg}.
\begin{assumption}\label{assump_composite}
\begin{enumerate}[{(}a{)}]
\item $f(\bvec x)$ is continuously differentiable with $L$-Lipschitz continuous gradient.
\item  $g(\bvec x)$ is decomposed into a DC function as
\begin{align}
g(\bvec x)=g_{1}(\bvec x)-g_{2}(\bvec x),
\label{eq:g=g1-g2}
\end{align}
where $g_{1}(\bvec x)$ is proper, lower semicontinuous and convex, and $g_{2}(\bvec x)$ is continuous and convex.
\item  $F(\bvec x)$ is bounded from below and coercive.
\end{enumerate}
\end{assumption}
The penalized formulation \eqref{eq:l2penalty} of the sparse constrained problem can be regarded as problem~\eqref{eq:DCreg} satisfying Assumption~\ref{assump_composite};
\begin{itemize}
  \item
$\of(\bvec x)+\rho\|\bvec x\|_2^2$ corresponds to the smooth term
    $f(\bvec x)$ in \eqref{eq:DCreg} which has $L=\oL+2\rho$,
\item
  the indicator function $I_C(\bvec x)$ of $C$ corresponds to $g_1(\bvec x)$, and
\item
  $g_2(\bvec x)=\rho\knorm{\bvec x}_{k,2}^2$.
  \end{itemize}
In addition, many nonconvex regularized problems are included in this setting, as 
shown in Table~\ref{table:regularizer}.
\begin{table}[h]
\begin{center}
\caption{DC decompositions of sparse regularizers ($\lambda$ and $\theta$ denote nonnegative hyperparameters), given in~\cite{gong2013general} except for $\ell_{1-2}$.}\label{table:regularizer}
{\small
\begin{tabular}{|l||c|c|}\hline
\multicolumn{1}{|c||}{name of regularizer}&$g_1$& $g_2$\\\hline
$\ell_{1}$ norm~\cite{tibshirani1996regression} & $\lambda\|\bvec x\|_{1}$ & 0 \\
capped-$\ell_{1}$~\cite{zhang2010analysis} &  $\lambda\|\bvec x\|_{1}$ & $\lambda\sum\limits_{i=1}^{n} \max\{|x_{i}| -\theta,0\}$ \\
LSP (Log Sum Penalty)~\cite{candes2008enhancing} & $\lambda\|\bvec x\|_{1}$ & $\lambda\sum\limits_{i=1}^{n}(|x_{i}|-\log(1+|x_{i}|/\theta)) $ \\
\begin{tabular}{@{}l}SCAD (Smoothly Clipped\\~~~~~Absolute Deviation)~\cite{fan2001variable}\end{tabular} & $\lambda\|\bvec x\|_{1}$ & $
\sum\limits_{i=1}^{n} \left\{
\begin{array}{ll}
  0  & \mbox{ if } |x_i|\leq \lambda\\
  \frac{x_i^2-2\lambda |x_i|+\lambda^2}{2(\theta-1)} & \mbox{ if } \lambda < |x_i|\leq \theta \lambda \\
   (\lambda |x_i|-\frac{(\theta+1)\lambda^2}{2})  & \mbox{ if } |x_i| > \theta \lambda \\
\end{array}
\right.
$  \\
\begin{tabular}{@{}l}MCP (Minimax\\~~~~~Concave Penalty)~\cite{zhang2010nearly}\end{tabular} & $\lambda\|\bvec x\|_{1}$ & $
\sum\limits_{i=1}^{n}
\left\{
\begin{array}{ll}
  \frac{x_i^2}{2\theta} & \mbox{ if }  |x_i|\leq \theta \lambda  \\
  \lambda |x_i| -\frac{\theta \lambda^2}{2}  & \mbox{ if }  |x_i|> \theta \lambda  \\
\end{array}
\right.
$    \\
$\ell_{1-2}$~\cite{yin2015minimization} & $\lambda\|\bvec x\|_{1}$ & $\lambda\|\bvec x\|_{2}$ \\\hline
\end{tabular}
}
\end{center}
\end{table}

We can naturally extend our PDCA to~\eqref{eq:DCreg}, which is originally proposed for~\eqref{eq:l1penalty} in~\cite{gotoh2015dc}.
Similarly to~\eqref{eq:l1proxDCdecomp}, we consider the following DC decomposition of $F$:
\begin{align}
F(\bvec x)=\left(\frac{L}{2}\|\bvec x\|_2^2+g_1(\bvec x)\right)- \left(\frac{L}{2}\|\bvec x\|_2^2-f(\bvec x)+g_2(\bvec x)\right).\label{eq:proxDCdecomp}
\end{align} 
Then the subproblem of DCA for~\eqref{eq:proxDCdecomp} becomes
\begin{align}
\bvec x^{(t+1)}&\in \argmin_{\bvec x\in \mathbb R^n}\left\{ \frac{L}{2}\|\bvec x\|_2^2+g_1(\bvec x)-\bvec x^\top \left(L\bvec x^{(t)}-\nabla f(\bvec x^{(t)})+\bvec s(\bvec x^{(t)}) \right)\right\}\nonumber\\
&=\prox_{g_1/L}\left(\bvec x^{(t)}-\frac{1}{L}\nabla f(\bvec x^{(t)})+\frac{1}{L}\bvec s(\bvec x^{(t)}) \right),
\label{eq:PDCAsubprob}
\end{align}
where $\bvec s^{(t)}\in\partial g_2(\bvec x^{(t)})$.
The subproblem~\eqref{eq:PDCAsubprob} of PDCA is reduced to calculating the proximal operator of $g_1/L$,
which leads to closed-form solutions for various $g_1$.

Now we recall the Sequential Convex Programming (SCP)~\cite{lu2012sequential} as a related work.
SCP solves problem~\eqref{eq:DCreg} by generating a sequence $\{\bvec x^{(t)}\}$ obtained via
\begin{align*}
&\bvec x^{(t+1)}\in \argmin_{\bvec x\in\mathbb R^{n}}f(\bvec x^{(t)})+\langle \nabla f(\bvec x^{(t)}),\bvec x-\bvec x^{(t)} \rangle\nonumber\\
&+\frac{L}{2}\|\bvec x-\bvec x^{(t)}\|_{2}^{2}+g_{1}(\bvec x)-g_{2}(\bvec x^{(t)})-\langle s(\bvec x^{(t)}),\bvec x-\bvec x^{(t)} \rangle. 
\end{align*}
This problem is essentially the same as \eqref{eq:PDCAsubprob}, but the paper does not mention how to solve such convex subproblems, nor the way of computing closed-form solutions. 
They derive this algorithm and analyze its convergence independently of the theory of DC programming.
Our algorithm sheds a new light on SCP. Namely, SCP can be 
viewed as a variant of DC algorithm and thus its convergence property such as global convergence is automatically satisfied.

\subsection{Relation to PGM variants}
Especially for convex functions $f$ and $g$, we can see that PDCA reduces to the renowned Proximal Gradient Method (PGM):
\begin{align}
\bvec x^{(t+1)}=\prox_{g_1/L}\left(\bvec x^{(t)}-\frac{1}{L}\nabla f(\bvec x^{(t)}) \right).
\label{eq:PGMsubprob}
\end{align}

PGMs for convex optimization problems have been investigated in a different research stream from DCA
for nonconvex optimization problems, but we can find a similarity of the resulting
subproblems: \eqref{eq:PDCAsubprob} and \eqref{eq:PGMsubprob}.
In recent years, developing efficient algorithms for solving convex cases of
\eqref{eq:DCreg} has become a topic of intense research especially
in the machine learning community and  various techniques
for obtaining faster convergence were proposed for PGMs. We also can use 
such techniques including the backtracking and the acceleration for our method, namely, PDCA.

Currently, popular research directions regarding PGMs include applying PGMs
to nonconvex optimization problems.
For example, General Iterative-Shrinkage Thresholding (GIST) algorithm~\cite{gong2013general}
was proposed for nonconvex \eqref{eq:DCreg}.
GIST generates a sequence $\{\bvec x^{(t)}\}$ by
\begin{align}
\bvec x^{(t+1)}=\prox_{g/l^{(t)}}\left(\bvec x^{(t)}-\frac{1}{l^{(t)}}\nabla f(\bvec x^{(t)})\right),\label{PGM}
\end{align}
where $1/l^{(t)}$ is a proper step size. The paper \cite{gong2013general}
showed closed-form solutions of \eqref{PGM} for the regularizers in Table~\ref{table:regularizer} except for $\ell_{1-2}$
\footnote{
For the $\ell_{1-2}$ regularizer, Liu and Pong~\cite{liu2016further} showed closed-form solutions of~\eqref{PGM}.
}
.

Note that applying GIST to the reformulations \eqref{eq:l1penalty} and \eqref{eq:l2penalty} for constrained sparse optimization problems \eqref{eq:l0const}
seems difficult
because of the term $I_C(\bvec x)$.
The sequence $\{\bvec x^{(t)}\}$ of GIST subsequentially converges to a stationary point of~\eqref{eq:DCreg}, as far as $l^{(t)}$ is fixed to an arbitrary value larger than a Lipschitz constant $L$ of $\nabla f(\bvec x)$.
We will 
describe how to determine $l^{(t)}$ in practice, later when 
elaborating on our method.

\subsection{Backtracking}
To achieve faster convergence, several techniques 
such as the backtracking and the acceleration have been proposed for PGM and its variants, the latter of which is mentioned in Section~\ref{sec:acceleration}.
The backtracking line search initialized by Barzilai-Borwein (BB) rule~\cite{barzilai1988two}  is employed in GIST~\cite{gong2013general}
 to use a larger step size $1/l^{(t)}$ instead of $1/L$.
In the backtracking, we accept $l^{(t)}$ if the following criterion is satisfied for $\sigma\in(0,1)$:
\begin{align}\label{eq:backtracking}
F(\bvec x^{(t+1)})\le F(\bvec x^{(t)})-\frac{\sigma}{2}\|\bvec x^{(t+1)}-\bvec x^{(t)}\|_2^2,
\end{align}
otherwise $l^{(t)}\leftarrow \eta l^{(t)}$ with $\eta>1$
and check the above inequality again. 
The initial $l^{(t)}$ at each iteration $t$ is given by
the BB rule~\cite{barzilai1988two} as 
\begin{align}\label{eq:BBrule}
l^{(t)}=\frac{\langle \bvec x^{(t)}-\bvec x^{(t-1)},\bvec x^{(t)}-\bvec x^{(t-1)} \rangle}{\langle \bvec x^{(t)}-\bvec x^{(t-1)},\nabla f(\bvec x^{(t)})-\nabla f(\bvec x^{(t-1)})\rangle},
\end{align}
For convergence and practical use, $l^{(t)}$ is projected onto the interval $[l_{\min},l_{\max}]$ with $0<l_{\min}<l_{\max}$.

Now we consider employing the backtracking technique in PDCA.
We use a larger step size $1/l^{(t)}$ instead of $1/L$:
\begin{align}
\bvec x^{(t+1)}&\in \argmin_{\bvec x\in \mathbb R^n}\left\{ \frac{l^{(t)}}{2}\|\bvec x\|_2^2+g_1(\bvec x)-\bvec x^\top \left(l^{(t)}\bvec x^{(t)}-\nabla f(\bvec x^{(t)})+\bvec s(\bvec x^{(t)}) \right)\right\}\nonumber\\
&=\prox_{g_1/l^{(t)}}\left(\bvec x^{(t)}-\frac{1}{l^{(t)}}\nabla f(\bvec x^{(t)})+\frac{1}{l^{(t)}}\bvec s(\bvec x^{(t)}) \right),
\label{eq:PDCAbtsubprob}
\end{align}
The resulting algorithm is summarized in Algorithm~\ref{alg:PDCAbt}, whose convergence is analyzed similarly to GIST algorithm in~\cite{gong2013general}.
\begin{algorithm}\caption{Proximal DC Algorithm for~\eqref{eq:DCreg} with backtracking line search}\label{alg:PDCAbt}
\begin{algorithmic}
\STATE  $\bvec x^{(0)}\in \dom\ g_1$.
\FOR{$t=0,1,\ldots$}
\STATE Compute $\bvec x^{(t+1)}$ by~\eqref{eq:PDCAbtsubprob}, where the step size $1/l^{(t)}$ is dynamically computed by~\eqref{eq:backtracking}--\eqref{eq:BBrule}.
\ENDFOR
\end{algorithmic}
\end{algorithm}

\begin{thm}\label{theo:PDCA_conv}
The sequence $\{\bvec x^{(t)}\}$ generated by Algorithm~\ref{alg:PDCAbt} converges to a stationary point of \eqref{eq:DCreg}.
\end{thm}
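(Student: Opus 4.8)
The plan is to upgrade the classical subsequential analysis of DCA/GIST to convergence of the \emph{entire} sequence by invoking the Kurdyka--\L{}ojasiewicz (K\L{}) property of $F$, following the scheme of Attouch, Bolte and Svaiter. Throughout I abbreviate $\Delta^{(t)}:=\bvec x^{(t+1)}-\bvec x^{(t)}$, and $\bvec s^{(t)}\in\partial g_2(\bvec x^{(t)})$ denotes the subgradient used in~\eqref{eq:PDCAbtsubprob}.

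First I would establish the three descent-type ingredients. (i) \emph{Well-posedness and sufficient decrease:} using the descent lemma for $f$ (valid once $l^{(t)}\ge L$) together with the convexity of $g_1,g_2$, the acceptance test~\eqref{eq:backtracking} is satisfiable for every $l^{(t)}$ exceeding a fixed threshold, so the backtracking loop terminates in finitely many steps and yields $l^{(t)}\in[l_{\min},\bar l]$ with $\bar l<\infty$; the accepted step then satisfies $F(\bvec x^{(t)})-F(\bvec x^{(t+1)})\ge\frac{\sigma}{2}\|\Delta^{(t)}\|_2^2$. (ii) \emph{Summability:} since $F$ is bounded below (Assumption~\ref{assump_composite}(c)), $\{F(\bvec x^{(t)})\}$ is nonincreasing and converges to some $F^\ast$, and telescoping yields $\sum_t\|\Delta^{(t)}\|_2^2<\infty$, so $\|\Delta^{(t)}\|_2\to0$. (iii) \emph{Boundedness:} coercivity of $F$ confines the iterates to the sublevel set $\{F\le F(\bvec x^{(0)})\}$, so $\{\bvec x^{(t)}\}$ is bounded and its accumulation-point set $\Omega$ is nonempty and compact, with $F\equiv F^\ast$ on $\Omega$.

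Next comes the relative-error bound, the key ingredient beyond the subsequential argument. Reading off the optimality condition of the strongly convex subproblem~\eqref{eq:PDCAbtsubprob} gives $\bvec s^{(t)}-\nabla f(\bvec x^{(t)})-l^{(t)}\Delta^{(t)}\in\partial g_1(\bvec x^{(t+1)})$, whence
\begin{align*}
\bvec v^{(t+1)}:=\nabla f(\bvec x^{(t+1)})-\nabla f(\bvec x^{(t)})-l^{(t)}\Delta^{(t)}\in \nabla f(\bvec x^{(t+1)})+\partial g_1(\bvec x^{(t+1)})-\bvec s^{(t)},
\end{align*}
with $\|\bvec v^{(t+1)}\|_2\le(L+\bar l)\|\Delta^{(t)}\|_2$. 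The obstacle, and the main difficulty of the theorem, is that $\bvec v^{(t+1)}$ is a residual measured against $\partial g_2$ evaluated at the \emph{previous} iterate $\bvec x^{(t)}$ rather than at $\bvec x^{(t+1)}$, because DCA linearizes the concave part at the old point; since $g_2$ is only convex and continuous (in the leading application $g_2=\rho\knorm{\bvec x}_{k,2}^2$ is a finite maximum of smooth functions and is genuinely nonsmooth), $\partial g_2$ need not be Lipschitz and $\bvec s^{(t)}$ cannot be compared to $\partial g_2(\bvec x^{(t+1)})$ by a naive $O(\|\Delta^{(t)}\|_2)$ estimate. I would resolve this by passing to the Lyapunov/potential function $\mathcal E(\bvec x,\bvec y):=F(\bvec x)+\tfrac{\bar l}{2}\|\bvec x-\bvec y\|_2^2$ on the product space and applying the K\L{} machinery to $\{(\bvec x^{(t+1)},\bvec x^{(t)})\}$: the extra quadratic absorbs the old-versus-new mismatch, and outer semicontinuity together with local boundedness of $\partial g_2$ (guaranteed by convexity and continuity) lets one produce a genuine element of $\partial\mathcal E(\bvec x^{(t+1)},\bvec x^{(t)})$ whose norm is $O(\|\Delta^{(t)}\|_2+\|\Delta^{(t-1)}\|_2)$.

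Finally I would invoke the K\L{} property of $F$ (equivalently of $\mathcal E$), which holds in particular whenever $F$ is semi-algebraic or definable---covering all the regularizers in Table~\ref{table:regularizer} and the constraint sets of Examples~\ref{ex:sparsePCA}--\ref{ex:sparsennreg}, for which $g_2=\rho\knorm{\bvec x}_{k,2}^2$ and $I_C$ are semi-algebraic. The abstract convergence theorem then combines the sufficient decrease, the relative-error bound, and the continuity of $F$ along the sequence to yield the finite-length property $\sum_t\|\Delta^{(t)}\|_2<\infty$. Consequently $\{\bvec x^{(t)}\}$ is a Cauchy sequence and converges to a single limit $\bvec x^\ast\in\Omega$; passing to the limit in the inclusion above and using outer semicontinuity of $\partial g_1$ and $\partial g_2$ shows $\bvec 0\in\nabla f(\bvec x^\ast)+\partial g_1(\bvec x^\ast)-\partial g_2(\bvec x^\ast)$, i.e.\ $\bvec x^\ast$ is a stationary point of~\eqref{eq:DCreg}. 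As indicated, the delicate step is the relative-error bound, since the concave linearization forces a comparison of subgradients of the nonsmooth $g_2$ at consecutive iterates, and the potential-function device is what makes the finite-length---hence whole-sequence---conclusion go through.
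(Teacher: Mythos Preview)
Your approach is sound (under the added K\L{} hypothesis you make explicit) but differs substantially from the paper's proof. The paper does not use K\L{} at all: after a lemma showing $l^{(t)}$ is bounded (your step~(i)), it obtains $\|\Delta^{(t)}\|_2\to 0$ from the acceptance test~\eqref{eq:backtracking} and boundedness of $\{\bvec x^{(t)}\}$ from coercivity (your (ii)--(iii)), then directly asserts that $\{\bvec x^{(t)}\}$ is a convergent sequence and passes to the limit in the optimality condition of~\eqref{eq:PDCAbtsubprob}, using closedness of $\partial g_2$ together with an outer-semicontinuity proposition for $\partial g_1$ (which needs $g_1(\bvec x^{(t+1)})\to g_1(\bvec x^\ast)$, verified via the minimizing property of $\bvec x^{(t+1)}$ in the subproblem). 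The paper's route is thus far more elementary and stays entirely within Assumption~\ref{assump_composite}; no potential function, no finite-length argument, and no relative-error bound appear. What your approach buys is a \emph{genuine} proof of whole-sequence convergence: the paper's inference from ``bounded and $\|\Delta^{(t)}\|_2\to 0$'' to ``convergent'' is not justified by those two facts alone, so as written the paper really only establishes subsequential convergence to stationary points. Your K\L{} machinery---in particular the Lyapunov device $\mathcal E(\bvec x,\bvec y)=F(\bvec x)+\tfrac{\bar l}{2}\|\bvec x-\bvec y\|_2^2$ that absorbs the staggered subgradient $\bvec s^{(t)}\in\partial g_2(\bvec x^{(t)})$---is precisely what would upgrade this to the stated conclusion, at the price of importing a structural assumption (semi-algebraicity or definability) that Assumption~\ref{assump_composite} does not include.
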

See Appendix~\ref{sec:proof:PDCA_conv} for the Proof of Theorem~\ref{theo:PDCA_conv}.
The next theorem ensures the convergence rate of Algorithm~\ref{alg:PDCAbt} with respect to $\|\bvec x^{(t+1)}-\bvec x^{(t)}\|_2^2$.
The proof is almost the same as Theorem 2 in~\cite{gong2013general}.
\begin{thm}\label{theo:PDCA_speed}
For the sequence $\{\bvec x^{(t)}\}$ generated by Algorithm~\ref{alg:PDCAbt} and
its accumulation point $\bvec x^{\ast}$, the following holds for any $\tau\ge 1$;
\begin{align*}
\min_{0\le t\le \tau}\|\bvec x^{(t+1)}-\bvec x^{(t)}\|_2^2\le \frac{2(F(\bvec x^{(0)})-F(\bvec x^{\ast}))}{\tau\sigma}.
\end{align*}
\end{thm}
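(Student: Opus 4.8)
The proof will rest entirely on the sufficient-decrease inequality that the backtracking test \eqref{eq:backtracking} enforces at every iteration. As in the analysis behind Theorem~\ref{theo:PDCA_conv} (and mirroring Theorem~2 of \cite{gong2013general}), the line search terminates after finitely many reductions $l^{(t)}\leftarrow\eta l^{(t)}$: once $l^{(t)}$ exceeds a threshold of order $L+\sigma$, the descent lemma for the $L$-smooth function $f$ combined with the optimality of $\bvec x^{(t+1)}$ in the proximal subproblem \eqref{eq:PDCAbtsubprob} forces \eqref{eq:backtracking} to hold. Hence I may assume
\[
\tfrac{\sigma}{2}\,\|\bvec x^{(t+1)}-\bvec x^{(t)}\|_2^2\ \le\ F(\bvec x^{(t)})-F(\bvec x^{(t+1)})\qquad(t=0,1,2,\ldots),
\]
which in particular shows that $\{F(\bvec x^{(t)})\}$ is nonincreasing.

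Next I would telescope this inequality over $t=0,1,\ldots,\tau$ to obtain
\[
\tfrac{\sigma}{2}\sum_{t=0}^{\tau}\|\bvec x^{(t+1)}-\bvec x^{(t)}\|_2^2\ \le\ F(\bvec x^{(0)})-F(\bvec x^{(\tau+1)}).
\]
To turn the right-hand side into $F(\bvec x^{(0)})-F(\bvec x^{\ast})$, I use two facts. First, since $\{F(\bvec x^{(t)})\}$ is nonincreasing and (by Assumption~\ref{assump_composite}) bounded below, it converges, so $F(\bvec x^{(\tau+1)})\ge\lim_{t\to\infty}F(\bvec x^{(t)})$. Second, $F$ is lower semicontinuous under Assumption~\ref{assump_composite} ($f$ continuous, $g_1$ lower semicontinuous, $g_2$ continuous), so evaluating along a subsequence converging to the accumulation point $\bvec x^{\ast}$ gives $\lim_{t\to\infty}F(\bvec x^{(t)})\ge F(\bvec x^{\ast})$. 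Combining these, $F(\bvec x^{(\tau+1)})\ge F(\bvec x^{\ast})$, hence
\[
\tfrac{\sigma}{2}\sum_{t=0}^{\tau}\|\bvec x^{(t+1)}-\bvec x^{(t)}\|_2^2\ \le\ F(\bvec x^{(0)})-F(\bvec x^{\ast}).
\]

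Finally, the sum on the left consists of $\tau+1$ nonnegative terms, so it is bounded below by $(\tau+1)$ times the smallest of them; rearranging and using $\tau+1\ge\tau$ yields
\[
\min_{0\le t\le\tau}\|\bvec x^{(t+1)}-\bvec x^{(t)}\|_2^2\ \le\ \frac{2\big(F(\bvec x^{(0)})-F(\bvec x^{\ast})\big)}{(\tau+1)\sigma}\ \le\ \frac{2\big(F(\bvec x^{(0)})-F(\bvec x^{\ast})\big)}{\tau\sigma},
\]
which is the claimed bound. The only step that is not bookkeeping is the justification of the per-iteration sufficient-decrease inequality, i.e.\ finiteness of the backtracking loop; but this is precisely the ingredient already established for Theorem~\ref{theo:PDCA_conv}, so in the write-up it can simply be invoked rather than re-derived.
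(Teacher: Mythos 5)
Your proposal is correct and follows essentially the same route as the paper's proof: invoke the sufficient-decrease inequality guaranteed by the backtracking test \eqref{eq:backtracking}, telescope over $t=0,\ldots,\tau$, bound the minimum by the average, and replace $F(\bvec x^{(\tau+1)})$ by $F(\bvec x^{\ast})$ using monotonicity. You are somewhat more careful than the paper in justifying the final substitution (via lower semicontinuity of $F$ along a convergent subsequence) and in noting the finiteness of the line search, but these are refinements of the same argument rather than a different approach.
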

\begin{proof}
It follows from the criterion~\eqref{eq:backtracking} that
\begin{align*}
\frac{\sigma}{2}\|\bvec x^{(t+1)}-\bvec x^{(t)}\|_2^2\le  F(\bvec x^{(t)})- F(\bvec x^{(t+1)}).
\end{align*}
Summing the above inequality over $t=0,\ldots,\tau$, we have
\begin{align*}
\frac{\sigma}{2}\sum_{t=0}^\tau\|\bvec x^{(t+1)}-\bvec x^{(t)}\|_2^2\le  F(\bvec x^{(0)})- F(\bvec x^{(t+1)}).
\end{align*}
Thus we have
\begin{align*}
\min_{0\le t\le n}\|\bvec x^{(t+1)}-\bvec x^{(t)}\|_2^2\le  \frac{2(F(\bvec x^{(0)})- F(\bvec x^{(t+1)}))}{\tau\sigma}\le  \frac{2(F(\bvec x^{(0)})- F(\bvec x^{\ast}))}{\tau\sigma}.
\end{align*}
\end{proof}

\section{Accelerated algorithm for constrained sparse optimization}\label{sec:acceleration}
In this section, we provide an accelerated version of PDCA for~\eqref{eq:DCreg}.

\subsection{Overview of accelerated methods for nonconvex optimization}
For convex $f$ and $g$, the so-called Nesterov's acceleration technique helps PGM accelerate 
practically and theoretically; the resulting method is known as Accelerated Proximal Gradient (APG) method~\cite{beck2009fast}.
APG is guaranteed to have $\mathrm{O}(1/t^2)$ convergence rate, which is optimal among all the first-order methods.
However, APG in \cite{beck2009fast} is for convex problems and has no guarantees to yield stationary points for the nonconvex case until quite recently.

Recently, Li and Lin~\cite{li2015accelerated} has proposed two APGs for nonconvex optimization: monotone APG and nonmonotone APG, each of which uses a different type of acceleration. 
Their nonmonotone APG with fixed step size is summarized in Algorithm~\ref{alg:nmAPG}.
This procedure generates a sequence $\{\bvec x^{(t)}\}$ that subsequentially converges to a stationary point, retaining $\mathrm{O}(1/t^2)$ convergence rate for the convex case.
\begin{algorithm}\caption{nonmonotone Accelerated Proximal Gradient (nm-APG) method~\cite{li2015accelerated}}\label{alg:nmAPG}
\begin{algorithmic}
\STATE $\bvec x^{(0)}=\bvec x^{(1)}=\bvec z^{(1)}\in\dom\ g_1, \theta^{(0)}=0$, $\theta^{(1)}=1$, $\delta>0$, and $\eta\in(0,1]$.
\FOR{$t=1,2,\ldots$}
\STATE $\bvec y^{(t)}=\bvec x^{(t)}+\frac{\theta^{(t-1)}}{\theta^{(t)}}(\bvec z^{(t)}-\bvec x^{(t)})+\frac{\theta^{(t-1)}-1}{\theta^{(t)}}(\bvec x^{(t)}-\bvec x^{(t-1)})$,
\STATE $\bvec z^{(t+1)}=\prox_{g/L}(\bvec y^{(t)}-\frac{1}{L}\nabla f(\bvec y^{(t)}))$.
\IF{$F(\bvec z^{(t+1)})+\delta \|\bvec z^{(t+1)}-\bvec y^{(t)}\|_2^2\le  \frac{\sum_{j=1}^t\eta^{t-j}F(\bvec x^{(j)})}{\sum_{j=1}^t \eta^{t-j}}$}
\STATE $\bvec x^{(t+1)}=\bvec z^{(t+1)}$.
\ELSE
\STATE $\bvec v^{(t+1)}=\prox_{g/L}(\bvec x^{(t)}-\frac{1}{L}\nabla f(\bvec x^{(t)}))$.
\ENDIF\STATE $\theta^{(t+1)}=\frac{\sqrt{4(\theta^{(t)})^2+1}+1}{2}$.
\ENDFOR
\end{algorithmic}
\end{algorithm}

We can take advantage of the similarity of PDCA and PGM for developing
an accelerated version of PDCA, as we did for developing
PDCA with backtracking step size rule in the previous section.
The accelerated version of PDCA, which 
will be discussed in Section~\ref{sec:acceleratedPDCA},
utilizes the acceleration technique of Li and Lin~\cite{li2015accelerated}. 

Quite recently, for the case where $f$ is convex, Wen et al.~\cite{wen2016proximal} has developed a modified PDCA by adding an extrapolation step to speed up its convergence.
They call it the proximal Difference-of-Convex Algorithm with extrapolation (pDCA${}_e$), which is summarized in Algorithm~\ref{alg:pDCAe}.
\begin{algorithm}\caption{Proximal DC Algorithm with extrapolation (pDCA${}_e$)~\cite{wen2016proximal}}\label{alg:pDCAe}
\begin{algorithmic}
\STATE $\bvec x^{(0)}=\bvec x^{(1)}\in\dom\, g_1$, $\{\beta_t\}\subseteq[0,1)$ with $\sup_t \beta_t<1$.
\FOR{$t=1,2,\ldots$}
\STATE Pick any $\bvec s(\bvec x^{(t)})\in\partial g_2(\bvec x^{(t)})$ and compute
\STATE $\bvec y^{(t)}=\bvec x^{(t)}+\beta_t(\bvec x^{(t)}-\bvec x^{(t-1)})$,
\STATE $\bvec x^{(t+1)}=\prox_{g_1/L}\left(\bvec y^{(t)}-\frac{1}{L}\nabla f(\bvec y^{(t)})+\frac{1}{L}\bvec s^{(t)} \right)$.
\ENDFOR
\end{algorithmic}
\end{algorithm}
We can see that pDCA${}_e$ is general enough to include many algorithms.
It reduces to PDCA for convex $f$ by setting $\beta_t\equiv 0$ in Algorithm~\ref{alg:pDCAe}, and to FISTA with the fixed or adaptive restart~\cite{o2015adaptive} for convex $f$ and $g$ by choosing $\{\beta_t\}$ appropriately.
 In other words, Wen et al.~\cite{wen2016proximal} developed another type of acceleration for PDCA,
while we utilized the acceleration technique of Li and Lin~\cite{li2015accelerated} for accelerating PDCA. 
Both works were done in parallel at almost the same time, and we added comparison of two acceleration
methods to our numerical experiment.

\subsection{Proposed Algorithm} \label{sec:acceleratedPDCA}
We propose the Accelerated Proximal DC Algorithm (APDCA) for~\eqref{eq:DCreg} by applying 
the Nesterov's acceleration technique to PDCA.
In order to establish good convergence properties, we employ the techniques used in Algorithm~\ref{alg:nmAPG}.
The procedure is summarized in Algorithm~\ref{alg:APDCA}, using the following procedure:
\begin{align}
\bvec y^{(t)}&=\bvec x^{(t)}+\frac{\theta^{(t-1)}}{\theta^{(t)}}(\bvec z^{(t)}-\bvec x^{(t)})+\frac{\theta^{(t-1)}-1}{\theta^{(t)}}(\bvec x^{(t)}-\bvec x^{(t-1)}),\label{eq:APDCAtop}\\
\bvec z^{(t+1)}&=\prox_{g_1/l_y^{(t)}}\left( \bvec y^{(t)}-\frac{1}{l_y^{(t)}}\nabla f(\bvec y^{(t)})+\frac{1}{l_y^{(t)}}\bvec s(\bvec y^{(t)}) \right),\label{eq:computez}\\
\bvec v^{(t+1)}&=\prox_{g_1/l_x^{(t)}}\left( \bvec x^{(t)}-\frac{1}{l_x^{(t)}}\nabla f(\bvec x^{(t)})+\frac{1}{l_x^{(t)}}\bvec s(\bvec x^{(t)}) \right),\label{eq:computev}\\
\bvec x^{(t+1)}&=\begin{cases} 
\bvec z^{(t+1)},&(F(\bvec z^{(t+1)})\le F(\bvec v^{(t+1)})),\\
\bvec v^{(t+1)},&(\text{otherwise}),
\end{cases}\label{eq:APDCAbottom} \\
\theta^{(t+1)}&=\frac{\sqrt{4(\theta^{(t)})^2+1}+1}{2}, \label{eq:APDCAupdateT}
\end{align}
where $\bvec s(\bvec x^{(t)})$ and $\bvec s(\bvec y^{(t)})$ denote subgradients of $g_{2}$ at $\bvec x^{(t)}$ and $\bvec y^{(t)}$, respectively.

\begin{algorithm}\caption{Accelerated Proximal DC Algorithm (APDCA)}\label{alg:APDCA}
\begin{algorithmic}
\STATE $\bvec x^{(0)}=\bvec x^{(1)}=\bvec z^{(1)}\in\dom\ g_1, \theta^{(0)}=0,$ and $\theta^{(1)}=1$.
\FOR{$t=1,2,\ldots$}
\STATE Compute $\bvec y^{(t)}$ and $\bvec z^{(t+1)}$ by~\eqref{eq:APDCAtop}--\eqref{eq:computez}, where 
the step size $1/l_y^{(t)}$ is fixed smaller than $1/L$ or dynamically computed by~\eqref{eq:backtracking}--\eqref{eq:BBrule}.
\IF{\eqref{eq:nmcriteria} holds}
\STATE $\bvec x^{(t+1)}=\bvec z^{(t+1)}$.
\ELSE
\STATE Compute $\bvec x^{(t+1)}$ by~\eqref{eq:computev} and \eqref{eq:APDCAbottom}, where 
the step size $1/l_x^{(t)}$ is fixed smaller than $1/L$ or dynamically computed by~\eqref{eq:backtracking} and \eqref{eq:BBrulefornonmonotone}.
\ENDIF\STATE Compute $\theta^{(t+1)}$ by \eqref{eq:APDCAupdateT}.
\ENDFOR
\end{algorithmic}
\end{algorithm}

By following \cite{li2015accelerated}, we 
take $\bvec y^{(t)}$ as a good extrapolation and omit to compute the second proximal operator~\eqref{eq:computev}  if the following criterion given by \cite{zhang2004nonmonotone} is satisfied:
\begin{align}
F(\bvec z^{(t+1)})+\delta \|\bvec z^{(t+1)}-\bvec y^{(t)}\|_2^2\le c^{(t)}:= \frac{\sum_{j=1}^t\eta^{t-j}F(\bvec x^{(j)})}{\sum_{j=1}^t \eta^{t-j}},\label{eq:nmcriteria}
\end{align}
where $\delta>0$ and $\eta\in(0,1]$ controls the weights of the convex combination.
Note that $c^{(t)}$ is computed step by step as
\begin{align*}
q^{(t+1)}&=\eta q^{(t)}+1,\\
c^{(t+1)}&=\frac{\eta q^{(t)}+F(\bvec x^{(t+1)})}{q^{(t+1)}},
\end{align*}
with $q^{(1)}=1$ and $c^{(1)}=F(\bvec x^{(0)})$.
Since $\bvec v^{(t)}$ is not necessarily computed in each iteration, the following initialization rule  for $l_x^{(t)}$ is used instead of~\eqref{eq:BBrule}:
\begin{align}\label{eq:BBrulefornonmonotone}
l_x^{(t)}=\frac{\langle \bvec x^{(t)}-\bvec y^{(t-1)},\bvec x^{(t)}-\bvec y^{(t-1)} \rangle}{\langle \bvec x^{(t)}-\bvec y^{(t-1)},\nabla f(\bvec x^{(t)})-\nabla f(\bvec y^{(t-1)})\rangle},
\end{align}
after which we project $l_x^{(t)}$ onto $[l_{\min},l_{\max}]$.

The convergence of Algorithm~\ref{alg:APDCA} is guaranteed by the next theorem,
which is proved similarly to Theorem 4 in~\cite{li2015accelerated}. 
\begin{thm}\label{theo:nmPDCA_conv}
Let $\Omega_1$ be the set of every $t$ at which \eqref{eq:nmcriteria} is satisfied and $\Omega_2$ be the set of the rest.
Then the sequences $\{\bvec x^{(t)}\}$, $\{\bvec v^{(t)}\}$, and $\{\bvec y^{(t_1)}\}_{t_1\in \Omega_1}$ generated by Algorithm~\ref{alg:APDCA} are bounded and
\begin{enumerate}
\item if $\Omega_1$ or $\Omega_2$ is finite, then any 
accumulation point $\bvec x^\ast$ of $\{\bvec x^{(t)}\}$ is a stationary point of~\eqref{eq:DCreg};
\item
otherwise, any 
accumulation points $\bvec x^\ast$ of $\{\bvec x^{(t_1+1)}\}_{t_1\in\Omega_1}$, $\bvec y^\ast$ of $\{\bvec y^{(t_1)}\}_{t_1\in\Omega_1}$, $\bvec v^\ast$ of $\{\bvec v^{(t_2+1)}\}_{t_2\in\Omega_2}$, and $\bvec x^\ast$ of $\{\bvec x^{(t_2)}\}_{t_2\in\Omega_2}$ are stationary points of~\eqref{eq:DCreg}.
\end{enumerate}
\end{thm}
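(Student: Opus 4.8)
The plan is to transcribe the proof of Theorem~4 of \cite{li2015accelerated} into the present DC setting; the only genuinely new ingredient is a sufficient-decrease estimate for one proximal--DC step, so I would establish that first. Concretely, I would show: if $\bvec u\in\dom g_1$, $\bvec s\in\partial g_2(\bvec u)$, $l\ge L$, and $\bvec w:=\prox_{g_1/l}\bigl(\bvec u-\tfrac1l\nabla f(\bvec u)+\tfrac1l\bvec s\bigr)$, then $F(\bvec w)\le F(\bvec u)-\bigl(l-\tfrac L2\bigr)\|\bvec w-\bvec u\|_2^2$. This is obtained by adding three inequalities: the $l$-strong convexity of the subproblem objective $\bvec x\mapsto\tfrac l2\|\bvec x\|_2^2+g_1(\bvec x)-\langle\bvec x,\,l\bvec u-\nabla f(\bvec u)+\bvec s\rangle$, compared at its minimizer $\bvec w$ and at $\bvec u$; the descent lemma $f(\bvec w)\le f(\bvec u)+\langle\nabla f(\bvec u),\bvec w-\bvec u\rangle+\tfrac L2\|\bvec w-\bvec u\|_2^2$ from Assumption~\ref{assump_composite}(a); and the subgradient inequality $g_2(\bvec w)\ge g_2(\bvec u)+\langle\bvec s,\bvec w-\bvec u\rangle$. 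All the linear cross-terms cancel and the estimate survives. Taking $(\bvec u,\bvec w,l)=(\bvec x^{(t)},\bvec v^{(t+1)},l_x^{(t)})$ (and noting the step sizes $1/l_x^{(t)},1/l_y^{(t)}$ stay in a fixed interval bounded away from $0$ and from above --- trivially for the fixed choice, and via this very inequality for the backtracking rule, which it shows to terminate, the decrease being then furnished directly by the acceptance criterion \eqref{eq:backtracking}) yields $F(\bvec v^{(t+1)})\le F(\bvec x^{(t)})-\tfrac{\delta'}{2}\|\bvec v^{(t+1)}-\bvec x^{(t)}\|_2^2$ for a fixed $\delta'>0$.

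Next I would run the nonmonotone bookkeeping. Writing $c^{(t)}$ through its recursion $q^{(t+1)}=\eta q^{(t)}+1$, $c^{(t+1)}=(\eta q^{(t)}c^{(t)}+F(\bvec x^{(t+1)}))/q^{(t+1)}$, an induction gives $F(\bvec x^{(t+1)})\le c^{(t)}$ for every $t$: on $\Omega_1$ this is exactly \eqref{eq:nmcriteria}, and on $\Omega_2$ it follows from $F(\bvec x^{(t+1)})\le F(\bvec v^{(t+1)})\le F(\bvec x^{(t)})\le c^{(t)}$, where the last step uses the identity $c^{(t)}-F(\bvec x^{(t)})=\eta q^{(t-1)}\bigl(c^{(t-1)}-F(\bvec x^{(t)})\bigr)/q^{(t)}\ge0$. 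Hence $\{c^{(t)}\}$ is non-increasing, and bounded below by Assumption~\ref{assump_composite}(c), so it converges. Since $F(\bvec x^{(t)})\le c^{(t-1)}\le c^{(1)}=F(\bvec x^{(0)})$, every $\bvec x^{(t)}$ (a prox output, hence in $\dom g_1$) lies in $\{F\le F(\bvec x^{(0)})\}$, which is bounded by coercivity; so $\{\bvec x^{(t)}\}$ is bounded, and $F(\bvec v^{(t+1)})\le F(\bvec x^{(t)})\le F(\bvec x^{(0)})$ makes $\{\bvec v^{(t)}\}$ bounded. From $c^{(t)}-c^{(t+1)}=(c^{(t)}-F(\bvec x^{(t+1)}))/q^{(t+1)}$, together with \eqref{eq:nmcriteria} on $\Omega_1$, the first-paragraph estimate on $\Omega_2$, and boundedness of $q^{(t+1)}$, I would deduce $\sum_{t\in\Omega_1}\|\bvec z^{(t+1)}-\bvec y^{(t)}\|_2^2<\infty$ and $\sum_{t\in\Omega_2}\|\bvec v^{(t+1)}-\bvec x^{(t)}\|_2^2<\infty$, so both differences tend to $0$ along their index sets; and $\bvec y^{(t_1)}=\bvec x^{(t_1+1)}-(\bvec z^{(t_1+1)}-\bvec y^{(t_1)})$ on $\Omega_1$ (where $\bvec x^{(t_1+1)}=\bvec z^{(t_1+1)}$) then shows $\{\bvec y^{(t_1)}\}_{t_1\in\Omega_1}$ is bounded.

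Finally I would pass to the limit in the first-order conditions. The update \eqref{eq:computez} is equivalent to $l_y^{(t)}(\bvec y^{(t)}-\bvec z^{(t+1)})-\nabla f(\bvec y^{(t)})+\bvec s(\bvec y^{(t)})\in\partial g_1(\bvec z^{(t+1)})$, and \eqref{eq:computev} analogously with $\bvec x^{(t)},\bvec v^{(t+1)}$. Given a convergent subsequence of any of the sequences in the statement, a further extraction makes the associated anchor point ($\bvec y^{(t)}$ or $\bvec x^{(t)}$) converge to the same limit $\bvec w^\ast$, since the relevant difference from the previous paragraph vanishes. As $g_2$ is finite-valued convex, $\partial g_2$ is locally bounded, so the subgradients $\bvec s(\bvec y^{(t)})$ (resp.\ $\bvec s(\bvec x^{(t)})$) are bounded and, along a further subsequence, converge to some $\bvec s^\ast\in\partial g_2(\bvec w^\ast)$ by outer semicontinuity of $\partial g_2$. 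Using boundedness of the $l^{(t)}$, continuity of $\nabla f$, and the closed graph of $\partial g_1$ (valid for proper lower semicontinuous convex $g_1$), the inclusion passes to the limit: $-\nabla f(\bvec w^\ast)+\bvec s^\ast\in\partial g_1(\bvec w^\ast)$, i.e.\ $\bvec 0\in\nabla f(\bvec w^\ast)+\partial g_1(\bvec w^\ast)-\partial g_2(\bvec w^\ast)$, which is precisely stationarity of \eqref{eq:DCreg} for the decomposition \eqref{eq:proxDCdecomp}. Item~2 of the theorem follows directly with $\bvec w^\ast$ in the role of $\bvec x^\ast$, $\bvec y^\ast$, or $\bvec v^\ast$; for item~1, if $\Omega_2$ (resp.\ $\Omega_1$) is finite then all large-index $\bvec x^{(t)}$ coincide with $\bvec z^{(t)}$ (resp.\ $\bvec v^{(t)}$), so $\{\bvec x^{(t)}\}$ is eventually one of the subsequences already handled. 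The step I expect to be the real obstacle is the uniform nonmonotone bookkeeping of the second paragraph --- squeezing both summability statements out of the single telescoping sum $\sum_t(c^{(t)}-c^{(t+1)})$, and especially the borderline choice $\eta=1$, where $q^{(t+1)}$ is unbounded and the crude estimate fails, so one must reproduce the sharper argument of \cite{li2015accelerated}; everything specific to the DC structure is confined to the first paragraph.
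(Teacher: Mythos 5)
Your proposal is correct and follows essentially the same route as the paper's proof, which likewise reduces everything to the Li--Lin-style boundedness and summability estimate \eqref{eq:thm4inNIPS}, then runs the three-case analysis on $\Omega_1,\Omega_2$ and passes to the limit in the prox optimality conditions via subsequence extraction and closedness of the subdifferentials; you simply write out the sufficient-decrease and nonmonotone bookkeeping steps that the paper outsources to the citation of Theorem~4 of \cite{li2015accelerated}. The only substantive deviations are harmless: you invoke the closed graph of $\partial g_1$ for a proper lower semicontinuous convex function where the paper instead verifies $g_1(\bvec x^{(t)})\to g_1(\bvec x^\ast)$ and applies Proposition~\ref{prop:semicontinuity} (both are valid since $g_1$ is convex here), and your caveat about the borderline case $\eta=1$ is well taken---the paper's own bound \eqref{eq:thm4inNIPS} carries $1-\eta$ in the denominator and hence implicitly requires $\eta<1$, even though the algorithm statement permits $\eta\in(0,1]$.
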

See Appendix~\ref{sec:proof:nmPDCA_conv} for the proof of Theorem~\ref{theo:nmPDCA_conv}.
\begin{rem} \label{conv_rate}
We can ensure the optimal convergence rate of Algorithm~\ref{alg:APDCA} with the fixed step size for convex optimization~\eqref{eq:DCreg},
though this is not the case with the $\ell_0$-constraint problem \eqref{eq:l2penalty}.
Since Algorithm~\ref{alg:APDCA} with the fixed step size is identical to Algorithm~\ref{alg:nmAPG}~\cite{li2015accelerated} if both $f$ and $g$ are convex,
the following convergence rate is guaranteed exactly the same as that of Algorithm~\ref{alg:nmAPG}.
Let $\{\bvec x^{(t)}\}$ be the sequence generated by Algorithm~\ref{alg:APDCA} with the fixed step size $1/L$ and assume that $f$ and $g$ are convex. Then for any $\tau\ge 1$, we have
\begin{align*}
F(\bvec x^{(\tau+1)})-F(\bvec x^{\ast})\le \frac{2}{L(\tau+1)^2}\|\bvec x^{(0)}-\bvec x^{\ast}\|_2^2,
\end{align*}
where $\bvec x^{\ast}$ is a global minimizer of~\eqref{eq:DCreg}.
\end{rem}


\section{Numerical experiments}\label{sec:experiment}
In this section, we demonstrate the numerical performance of our algorithm.
All the computations were executed 
on a PC with 2.4GHz Intel CPU Core i7 and 16GB of memory.
\subsection{Comparison of two accelerations for PDCA on unconstrained sparse optimization}
We compared two types of acceleration for PDCA: APDCA (Algorithm~\ref{alg:APDCA}) and pDCA$_{e}$~\cite{wen2016proximal} (Algorithm~\ref{alg:pDCAe}).
For pDCA${}_e$, we used the program code of~\cite{wen2016proximal}, which is available at http:\slash\slash{}www.mypolyuweb.hk\slash\~{}tkpong\slash{}pDCAe\_final\_codes\slash.
In their code, $1/L=1/\lambda_{\max}(\bvec A^\top\bvec A)$ is employed as a step size and the extrapolation parameter $\{\beta_t\}$ is set to perform both the fixed and the adaptive restart strategy (for the details on how to choose the parameters, see Sections 3 and 5 in~\cite{wen2016proximal}).
Since their code is designed to solve the $\ell_{1-2}$ regularized linear regression problem:
\begin{align}
\min_{\bm{x}\in\mathbb R^n}F(\bvec x)=\frac{1}{2}\|\bvec A\bvec x-\bvec b\|_2^2+\rho(\|\bvec x\|_1-\|\bvec x\|_2),\label{eq:l1-l2}
\end{align}
where $\bvec A\in \mathbb R^{m\times n}$, $\bvec b\in\mathbb R^m$, and $\rho>0$, the comparison was 
made on this problem.
We generated synthetic data following~~\cite{wen2016proximal}.
An $m\times n$ matrix $\bvec A$ was generated with i.i.d. standard Gaussian entries and then normalized so that each column $\bvec a_i$ of $\bvec A$ has unit norm, i.e., $\|\bvec a_i\|_2=1$.
Then a $k$-sparse vector $\bar{\bvec x}$ was generated to have i.i.d. standard Gaussian entries on an index subset of size $k$, which is chosen uniformly randomly from $\{1,\ldots,n\}$.
Finally, $\bvec b\in\mathbb R^m$ was generated by $\bvec b=\bvec A\bar{\bvec x}-0.01\cdot\bvec \varepsilon$, where $\bvec \varepsilon\in\mathbb R^m$ is a random vector with i.i.d. Gaussian entries.

We implemented two methods: APDCA${}_\mathrm{fix}$ and APDCA${}_\mathrm{bt}$.
APDCA${}_\mathrm{fix}$ represents Algorithm~\ref{alg:APDCA} with the fixed step size $1/L=1/\lambda_{\max}(\bvec A^\top\bvec A)$, while APDCA${}_\mathrm{bt}$ denotes Algorithm~\ref{alg:APDCA} with the backtracking line search with $\sigma=1.0\times 10^{-5}$.
We terminated these algorithms if the relative difference of the two successive objective values is less than $10^{-5}$.

The computational results on the synthetic data with $(m,n,k)=(720i,2560i,80i)$ for $i=1,\ldots,5$ are summarized in Table~\ref{table:APDCA-pDCAe},
where $t_L$ denotes the time for computing $L=\lambda_{\max}(\bvec A^\top \bvec A)$
\footnote{
The CPU times for APDCA${}_\mathrm{fix}$ and pDCA${}_e$ include $t_L$.
}
.
We can see that all the APDCAs require much fewer iterations than pDCA${}_e$, while pDCA${}_e$ achieves the best objective value.
pDCA${}_e$ converges in 1002 iterations for all the instances, which can be attributed to the fixed restart strategy employed at 
every 200 iterations 
of pDCA${}_e$.
The CPU time per iteration seems to depend on how many times the objective value is evaluated. 
Actually, pDCA${}_e$ requires no evaluation of $F$, APDCA${}_\mathrm{fix}$ requires once, and APDCA${}_\mathrm{bt}$ requires 
a couple of times.
APDCA${}_\mathrm{fix}$ converges the fastest on the four out of five instances.

\begin{table}[h]
\caption{Results on~\eqref{eq:l1-l2} with synthetic data, $\lambda=5.0\times10^{-4}$, $\bvec x^{(0)}=\bvec 0$.}\label{table:APDCA-pDCAe}
\begin{center}
\begin{tabular}{|c|c|c|ccc|}\hline
size of $\bvec A$&$t_L$ (s)&method&objective value&time (s)&iteration\\\hline
&&APDCA${}_\mathrm{fix}$&3.18e-02&\textbf{1.8}&377\\\cline{3-6}
$2560\times 720$&0.3&APDCA${}_\mathrm{bt}$&3.06e-02&3.5&449\\\cline{3-6}
&&pDCA${}_e$&\textbf{3.05e-02}&2.9&1002\\\hline
&&APDCA${}_\mathrm{fix}$&\textbf{6.41e-02}&11.5&610\\\cline{3-6}
$5120\times 1440$&2.2&APDCA${}_\mathrm{bt}$&6.70e-02&\textbf{10.1}&331\\\cline{3-6}
&&pDCA${}_e$&\textbf{6.41e-02}&12.3&1002\\\hline
&&APDCA${}_\mathrm{fix}$&1.05e-01&\textbf{16.3}&409\\\cline{3-6}
$7680\times 2160$&2.4&APDCA${}_\mathrm{bt}$&\textbf{1.02e-01}&30.5&443\\\cline{3-6}
&&pDCA${}_e$&\textbf{1.02e-01}&24.2&1002\\\hline
&&APDCA${}_\mathrm{fix}$&\textbf{1.30e-01}&\textbf{42.0}&614\\\cline{3-6}
$10240\times 2880$&4.7&APDCA${}_\mathrm{bt}$&1.32e-01&44.9&372\\\cline{3-6}
&&pDCA${}_e$&\textbf{1.30e-01}&43.5&1002\\\hline
&&APDCA${}_\mathrm{fix}$&1.67e-01&\textbf{41.1}&363\\\cline{3-6}
$12800\times 3600$&8.1&APDCA${}_\mathrm{bt}$&1.63e-01&68.0&362\\\cline{3-6}
&&pDCA${}_e$&\textbf{1.60e-01}&68.4&1002\\\hline
\end{tabular}
\end{center}
\end{table}

\subsection{Results for constrained sparse optimization}
We compared the performance of our algorithms with various DCAs for the $\ell_0$-constrained optimization problem having some convex constraints.
In this section, we compare the following four methods for Examples~\ref{ex:sparsePCA}--\ref{ex:sparsennreg}.
\begin{itemize}
\item $\ell_2$-APDCA:
We apply Algorithm~\ref{alg:APDCA} with backtracking line search to the penalized problem~\eqref{eq:l2penalty}.
Since we cannot set $\rho$ in~\eqref{eq:l2penalty} as an exact penalty parameter, some errors tend to remain in the $(n-k)$ smallest components (in absolute value) of the output of APDCA.
Thus we round the output of APDCA to a $k$-sparse one by solving the small problem with $k$ variables obtained by fixing $(n-k)$ smallest components to $0$.

\item $\ell_2$-PDCA:
We apply Algorithm~\ref{alg:PDCAbt} to the penalized problem~\eqref{eq:l2penalty} and round the solution as in $\ell_2$-APDCA.

\item $\ell_1$-DCA~\cite{gotoh2015dc}:
  We apply DC algorithm with the DC decomposition~\eqref{eq:naiveDCdecomp}
  to the top-$(k,1)$ penalized formulation~\eqref{eq:l1penalty}.
 In the algorithm, the subproblems are solved by an optimization solver, IBM ILOG CPLEX 12.
\item MIP-DCA~\cite{thiao2010dc}:
We apply DCA to the transformed problem~\eqref{eq:concavepenalty}, where the big-M constant is fixed to 100.
The objective function in~\eqref{eq:concavepenalty} is decomposed as a DC function in the same way as~\eqref{eq:naiveDCdecomp}.
The DCA subproblems are solved by CPLEX.
\end{itemize}
Since a proper magnitude of penalty parameter $\rho$ depends on how we solve~\eqref{eq:l0const},
we tested $\rho=10^{i}$ $(i=0,\pm 1,\ldots,\pm 4)$ for each method and chose the one which attained the minimum objective value among $\rho$s that gave a $k$-sparse solution.
We again terminated all the algorithms if the relative difference of the two successive objective values is less than $10^{-5}$.

\subsubsection{Sparse principal component analysis}
We consider a sparse PCA
in Example~\ref{ex:sparsePCA}:
\begin{align*}
\min_{\bm{x}\in \mathbb R^n}\left\{-\bvec x^\top \bvec A\bvec x:\|\bvec x\|_0\le k,\ \|\bvec x\|_2\le 1\right\},
\end{align*}
where $\bvec A$ is an $n\times n$ positive semidefinite matrix.
We first 
examined the dependency 
on the initial solution $\bvec x^{(0)}$ 
with the pit props data~\cite{jeffers1967two}, a standard benchmark to test the performance of algorithms for sparse PCA, whose correlation matrix has $n=13$. 
We randomly generated 100 initial points where $x_i^{(0)}\sim N(0,1)$ for $i=1,\ldots,n$.
Figure~\ref{fig:PCAinitial} shows the box plot of the objective values obtained by four algorithms with $k=5$, where $\rho=1$ was selected for all the algorithms.
We can see that $\ell_2$-PDCA and $\ell_2$-APDCA tend to achieve 
better objective values and less dependency on the initial solution 
than the other DCAs.
\begin{figure}[h]
\begin{center}
\includegraphics{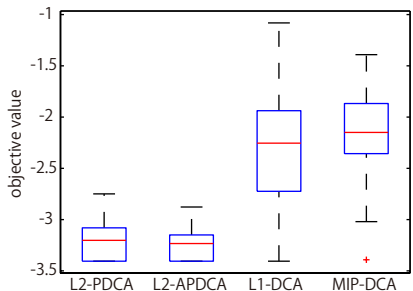}
\caption{Box plot of the objective values for 100 random initial solutions on the pit props data.}
\label{fig:PCAinitial}
\end{center}
\end{figure}

We show in Table~\ref{table:coloncancer} the results on the colon cancer data~\cite{alon1999broad}, which consists of 62 tissue samples with the gene expression profiles of $n=2000$ genes extracted from DNA micro-array data.
The parameters were fixed to $k=100$ and $\bvec x^{(0)}=\bvec 1/n$.
We can see that $\ell_2$-PDCA and $\ell_2$-APDCA converge faster owing to the light projection computations for the subproblems, while MIP-DCA achieves the best objective value.
\begin{table}[h]
  \caption{Results for sparse PCA with colon cancer data:
    the chosen $\rho$, the cardinality of the found solution,
    the attained 
    objective value, CPU time (sec.), and the number of iterations.
  }\label{table:coloncancer}
\begin{center}
\begin{tabular}{|c|ccccc|}\hline
method&$\rho$&cardinality&objective value&time (s)&iteration\\\hline
$\ell_2$-PDCA&1000&100&-45.70&0.4&28\\\hline
$\ell_2$-APDCA&1000&100&-48.24&\textbf{0.2}&26\\\hline
$\ell_1$-DCA&1000&100&-45.65&3.3&3\\\hline
MIP-DCA&1000&100&\textbf{-80.68}&17.3&8\\\hline
\end{tabular}
\end{center}
\end{table}

\subsubsection{Sparse portfolio selection}
We consider a sparse portfolio selection problem in Example~\ref{ex:sparseportfolio}:
\begin{align*}
\min_{\bm{x}\in \mathbb R^n}\left\{\alpha\bvec x^\top\bvec V\bvec x-\bvec r^\top\bvec x:\|\bvec x\|_0\le k,\ \bvec 1^\top \bvec x=1\right\},
\end{align*}
where $\bvec V$ is a covariance matrix, $\bvec r$ is a mean return vector, and $\alpha$ $(>0)$ is a risk-aversion parameter.

We used the 2148 daily return vectors of 1338 stocks listed in the first section of Tokyo Stock Exchange (TSE) through February 2008 to November 2016.\footnote{This data set was collected through NEEDS-FinancialQUEST, a databank service provided by Nikkei Media Marketing, Inc., and was modified by deleting series of data which include missing values for the period.} 
We fixed the parameters as $\alpha=10$, $k=10$, and $\bvec x^{(0)}=\bvec 1/n$.
Table~\ref{table:tokyostock} reports the results on 
the TSE 
return data.
Our algorithms tend to require much more iterations but attain better objective values and converge much faster.
\begin{table}[h]
  \caption{Results for sparse portfolio selection with 
TSE 
return 
data:
    the chosen $\rho$, the cardinality of the found solution,
    the 
    attained objective value, the obtained return, CPU time (sec.) and the number of iterations.
  }\label{table:tokyostock}
\begin{center}
{\small
\begin{tabular}{|c|cccccc|}\hline
method&$\rho$&cardinality&objective value&return&time (s)&iteration\\\hline
$\ell_2$-PDCA&1&10&-1.15e-04&8.05e-04&10.2&2523\\\hline
$\ell_2$-APDCA&1&10&\textbf{-3.91e-04}&\textbf{8.11e-04}&\textbf{2.3}&262\\\hline
$\ell_1$-DCA&1&9&7.75e-04&5.48e-04&45.8&2\\\hline
MIP-DCA&1&10&3.62e-04&6.03e-04&105.8&4\\\hline
\end{tabular}
}
\end{center}
\end{table}

\subsubsection{Sparse nonnegative least squares}
We consider a sparse nonnegative least squares problem in Example~\ref{ex:sparsennreg}:
\begin{align*}
\min_{\bm{x}\in \mathbb R^n}\left\{\frac{1}{2}\|\bvec A\bvec x-\bvec b\|_2^2:\|\bvec x\|_0\le k,\ x_i\ge 0\quad (i\in I)\right\},
\end{align*}
where $\bvec A\in \mathbb R^{m\times n}$, $\bvec b\in\mathbb R^m$,  and $I\subseteq\{1,\ldots,n\}$.

We report the results on synthetic data generated as follows.
Each column $\bm{a}_i$ of the matrix $\bm{A}^\top=(\bm{a}_1,\ldots,\bm{a}_m)$ was drawn independently from the normal distribution
 $N(\bm{0},\bm{\Sigma})$, where $\bm{\Sigma}=(\sigma_{ij})=(0.5^{|i-j|})$, and each column of $\bm{A}$ was then standardized, i.e, $\|\bm{a}_i\|_2=1$;
$\bm{b}$ was generated by $\bm{b}=\bm{A}\bar{\bm{x}}+\bm{\varepsilon}$, where $\bar{x}_i \sim U(-1,1)$ and $\varepsilon_i \sim N(0,1)$.

Table~\ref{table:nnreg} shows the results on synthetic data with various sizes.
We use $k=n/10$, $\bvec x^{(0)}=\bvec 1/n$, and $I=\{1,\ldots,\lfloor n/10\rfloor\}$.
We can see that $\ell_2$-PDCA is the fastest, and $\ell_2$-APDCA tends to find better solutions with smaller objective function values than the others. The additional steps 
to accelerate PDCA also contribute to find better solutions. 
Based on the above observations, we may conclude that $\ell_2$-PDCA and $\ell_2$-APDCA find a good solution for constrained sparse optimization problems with 
a small amount of computation time.

\begin{table}[h]
  \caption{Results for sparse nonnegative least squares with synthetic data:
    the chosen $\rho$, the cardinality of the found solution,
    the obtained objective value, CPU time (sec.) and the number of iterations.
  }\label{table:nnreg}
\begin{center}
{\small
\begin{tabular}{|c|c|ccccc|}\hline
problem size&method&$\rho$&cardinality&objective value&time (s)&iteration\\\hline
&$\ell_2$-PDCA&1&20&1.48e-01&\textbf{0.1}&68\\\cline{2-7}
$640\times 180$&$\ell_2$-APDCA&1&20&\textbf{1.27e-01}&\textbf{0.1}&102\\\cline{2-7}
&$\ell_1$-DCA&0.1&20&1.45e-01&1.4&3\\\cline{2-7}
&MIP-DCA&100&20&4.48e-01&3.2&3\\\hline
&$\ell_2$-PDCA&1&40&1.36e-01&\textbf{0.2}&123\\\cline{2-7}
$1280\times 360$&$\ell_2$-APDCA&1&40&\textbf{1.10e-01}&0.3&125\\\cline{2-7}
&$\ell_1$-DCA&0.1&38&1.44e-01&7.8&4\\\cline{2-7}
&MIP-DCA&100&40&4.40e-01&17.0&3\\\hline
&$\ell_2$-PDCA&1&60&1.36e-01&\textbf{0.4}&97\\\cline{2-7}
$1920\times 540$&$\ell_2$-APDCA&1&60&\textbf{1.12e-01}&0.7&146\\\cline{2-7}
&$\ell_1$-DCA&0.1&54&1.41e-01&22.1&4\\\cline{2-7}
&MIP-DCA&100&60&2.20e-01&71.4&3\\\hline
&$\ell_2$-PDCA&1&80&1.39e-01&\textbf{0.9}&129\\\cline{2-7}
$2560\times 720$&$\ell_2$-APDCA&1&80&\textbf{1.04e-01}&1.3&156\\\cline{2-7}
&$\ell_1$-DCA&0.1&66&1.64e-01&43.2&3\\\cline{2-7}
&MIP-DCA&100&80&2.07e-02&123.6&2\\\hline
&$\ell_2$-PDCA&1&100&1.16e-01&\textbf{1.8}&173\\\cline{2-7}
$3200\times 900$&$\ell_2$-APDCA&1&100&\textbf{9.46e-02}&2.4&184\\\cline{2-7}
&$\ell_1$-DCA&0.1&80&1.50e-01&78.3&3\\\cline{2-7}
&MIP-DCA&100&100&4.41e-01&201.3&2\\\hline
\end{tabular}
}
\end{center}
\end{table}

\section{Conclusions}
In this paper, we have proposed an efficient DCA to solve $\ell_0$-constrained optimization problems having simple convex constraints. 
By introducing a new DC representation of the $\ell_0$-constraint, we have reduced the associated subproblem to the projection operation onto the convex constraint set, where the availability of closed-form solutions enables us to implement the operations very efficiently. 
Consequently, 
the resulting DCA, called PDCA, still retains the efficiency even if
$\ell_0$-constrained optimization problems have some convex constraints.
Moreover, we have shown a link between PDCA and
Proximal Gradient Method (PGM), which leads to improvement of PDCA; the speed-up techniques
proposed for PGM such as the backtracking step size rule and 
the Nesterov's acceleration can be applied to PDCA. Indeed, the improved PDCA works
very well in numerical experiments, while retaining theoretical properties such as
the convergence to a stationary point of the input problem.

The techniques of PGMs have helped to speed up PDCA.
There are still a lot of issues that need to be addressed in the future.
Among those, some theoretical guarantee such as the convergence rate
discussed in Remark~\ref{conv_rate} 
 is the foremost one that needs to be investigated for nonconvex problem settings including
$\ell_0$-constrained optimization problems.
 Other speed-up techniques for PGMs such as adaptive restart strategy possibly improve
the performance of APDCA.
 
 \section*{Acknowledgements}
 We would like to thank Professor Ting Kei Pong for his comments on a manuscript and providing references.

\appendix
\section{Proofs of Propositions}
\subsection{Proof of Theorem \ref{theo:PDCA_conv}} 
\label{sec:proof:PDCA_conv}
To prove Theorem \ref{theo:PDCA_conv}, we provide the following lemma and proposition.
\begin{lemma}\label{lem:boundl}
In Algorithm~\ref{alg:PDCAbt}, $l^{(t)}$ is bounded for any $t\ge 0$.
\end{lemma}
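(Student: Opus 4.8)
The plan is to show that the backtracking procedure cannot shrink the step size $1/l^{(t)}$ below a fixed positive threshold, which gives a uniform upper bound on $l^{(t)}$. First I would recall the two mechanisms that determine $l^{(t)}$ in Algorithm~\ref{alg:PDCAbt}: the initial guess comes from the BB rule \eqref{eq:BBrule} followed by projection onto $[l_{\min},l_{\max}]$, so before backtracking even starts we have $l^{(t)}\ge l_{\min}>0$ and, more usefully, the initial value is at most $l_{\max}$. The only way $l^{(t)}$ can grow is through the backtracking updates $l^{(t)}\leftarrow\eta l^{(t)}$ with $\eta>1$, each triggered when the sufficient-decrease test \eqref{eq:backtracking} fails.

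The key step is to argue that \eqref{eq:backtracking} is automatically satisfied once $l^{(t)}\ge L+\sigma$ (or some similar explicit constant depending on $L$ and $\sigma$). For this I would use the descent-type estimate that comes from the PDCA subproblem \eqref{eq:PDCAbtsubprob}: since $\bvec x^{(t+1)}$ minimizes the $l^{(t)}$-strongly convex surrogate $\frac{l^{(t)}}{2}\|\bvec x\|_2^2+g_1(\bvec x)-\langle l^{(t)}\bvec x^{(t)}-\nabla f(\bvec x^{(t)})+\bvec s(\bvec x^{(t)}),\bvec x\rangle$, comparing its value at $\bvec x^{(t+1)}$ and at $\bvec x^{(t)}$ yields
\begin{align*}
g_1(\bvec x^{(t+1)})-g_1(\bvec x^{(t)})\le -\langle \nabla f(\bvec x^{(t)})-\bvec s(\bvec x^{(t)}),\bvec x^{(t+1)}-\bvec x^{(t)}\rangle-\frac{l^{(t)}}{2}\|\bvec x^{(t+1)}-\bvec x^{(t)}\|_2^2.
\end{align*}
Combining this with the descent lemma for the $L$-smooth $f$, namely $f(\bvec x^{(t+1)})\le f(\bvec x^{(t)})+\langle\nabla f(\bvec x^{(t)}),\bvec x^{(t+1)}-\bvec x^{(t)}\rangle+\frac{L}{2}\|\bvec x^{(t+1)}-\bvec x^{(t)}\|_2^2$, and with the convexity inequality $g_2(\bvec x^{(t+1)})\ge g_2(\bvec x^{(t)})+\langle\bvec s(\bvec x^{(t)}),\bvec x^{(t+1)}-\bvec x^{(t)}\rangle$, the cross terms in $\nabla f(\bvec x^{(t)})$ and $\bvec s(\bvec x^{(t)})$ cancel, leaving
\begin{align*}
F(\bvec x^{(t+1)})\le F(\bvec x^{(t)})-\frac{l^{(t)}-L}{2}\|\bvec x^{(t+1)}-\bvec x^{(t)}\|_2^2.
\end{align*}
Hence whenever $l^{(t)}\ge L+\sigma$ the acceptance criterion \eqref{eq:backtracking} holds, so backtracking stops. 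Since each failed test multiplies $l^{(t)}$ by $\eta$ and it starts at most at $l_{\max}$, the accepted value never exceeds $\max\{l_{\max},\eta(L+\sigma)\}$; combined with the floor $l_{\min}$ this gives $l^{(t)}\in[l_{\min},\max\{l_{\max},\eta(L+\sigma)\}]$ for every $t\ge 0$.

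The main obstacle I anticipate is purely bookkeeping: making the three inequalities above combine cleanly so that all terms involving $\nabla f(\bvec x^{(t)})$ and the subgradient $\bvec s(\bvec x^{(t)})$ cancel exactly, and tracking the precise threshold constant (it may come out as $L$, or $L+\sigma$, or $\tfrac{L}{1-\text{something}}$ depending on how the strong-convexity term is split). A secondary subtlety is the edge case $t=0$, where the BB rule \eqref{eq:BBrule} is undefined since $\bvec x^{(-1)}$ does not exist; there one simply takes the initial $l^{(0)}$ to lie in $[l_{\min},l_{\max}]$ by construction, so the same bound applies. Nothing here requires global convergence, only the one-step descent property of the subproblem, so the argument is self-contained.
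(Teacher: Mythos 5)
Your proposal is correct and follows essentially the same route as the paper: the descent lemma for the $L$-smooth part, the comparison of the subproblem objective at $\bvec x^{(t+1)}$ and $\bvec x^{(t)}$, and the subgradient inequality for $g_2$ combine to give $F(\bvec x^{(t+1)})\le F(\bvec x^{(t)})-\frac{l^{(t)}-L}{2}\|\bvec x^{(t+1)}-\bvec x^{(t)}\|_2^2$, so the test \eqref{eq:backtracking} is accepted once $l^{(t)}\ge L+\sigma$. Your additional remarks on the explicit cap $\max\{l_{\max},\eta(L+\sigma)\}$ and the $t=0$ initialization are harmless refinements of the same argument.
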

\begin{proof}
From Assumption~\ref{assump_composite} (a), we have
\begin{align}\label{fbound}
f(\bvec x^{(t+1)})\le f(\bvec x^{(t)})+\langle \nabla f(\bvec x^{(t)}),\bvec x^{(t+1)}-\bvec x^{(t)} \rangle+\frac{L}{2}\|\bvec x^{(t+1)}-\bvec x^{(t)}\|_2^2.
\end{align}
Since $\bvec x^{(t+1)}$ is obtained by computing~\eqref{eq:PDCAbtsubprob}, we have
\begin{align}
g_1(\bvec x^{(t+1)})\le g_1(\bvec x^{(t)})-\langle \nabla f(\bvec x^{(t)})-\bvec s(\bvec x^{(t)}),\bvec x^{(t+1)}-\bvec x^{(t)} \rangle-\frac{l^{(t)}}{2}\|\bvec x^{(t+1)}-\bvec x^{(t)}\|_2^2.
\end{align}
It follows from the definition of the subgradient that
\begin{align}\label{g2bound}
g_2(\bvec x^{(t+1)})\ge g_2(\bvec x^{(t)})+\langle \bvec s(\bvec x^{(t)}),\bvec x^{(t+1)}-\bvec x^{(t)} \rangle.
\end{align}
Combining~\eqref{fbound}--\eqref{g2bound}, we have
\begin{align}\label{Fbound}
F(\bvec x^{(t+1)})\le F(\bvec x^{(t)})-\frac{l^{(t)}-L}{2}\|\bvec x^{(t+1)}-\bvec x^{(t)}\|_2^2.
\end{align}
Therefore, the criterion~\eqref{eq:backtracking} is satisfied when $l^{(t)}\ge L+\sigma$ and thus $l^{(t)}$ is bounded.
\end{proof}
\begin{proposition}[\cite{rockafellar2009variational}, Proposition 1 in the supplemental of~\cite{li2015accelerated}]\label{prop:semicontinuity}
Let $\{\bvec x^{(t)}\}$ and $\{\bvec u^{(t)}\}$ be sequences such that $\bvec x^{(t)}\to\bvec x^\ast$, $\bvec u^{(t)}\to\bvec u^\ast$, $g_1(\bvec x^{(t)})\to g_1(\bvec x^\ast)$, and $\bvec u^{(t)}\in\partial g_1(\bvec x^{(t)})$.
Then we have $\bvec u^\ast\in \partial g_1(\bvec x^\ast)$.
\end{proposition}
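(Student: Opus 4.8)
The plan is to verify the defining subgradient inequality directly at the limit point, passing to the limit in the corresponding inequalities along the sequence. Since $g_1$ is convex, the membership $\bvec u^{(t)}\in\partial g_1(\bvec x^{(t)})$ means exactly that for every test point $\bvec y\in\mathbb R^n$,
\begin{align*}
g_1(\bvec y)\ge g_1(\bvec x^{(t)})+\langle \bvec u^{(t)},\bvec y-\bvec x^{(t)}\rangle .
\end{align*}
The goal is to establish the same inequality with the pair $(\bvec x^\ast,\bvec u^\ast)$ replacing $(\bvec x^{(t)},\bvec u^{(t)})$, since that is precisely the assertion $\bvec u^\ast\in\partial g_1(\bvec x^\ast)$.

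The main step is then to fix an arbitrary $\bvec y$ and take $t\to\infty$ in the displayed inequality. The left-hand side is independent of $t$. On the right-hand side, the linear term converges: because $\bvec u^{(t)}\to\bvec u^\ast$ and $\bvec x^{(t)}\to\bvec x^\ast$, bilinearity and continuity of the inner product give $\langle \bvec u^{(t)},\bvec y-\bvec x^{(t)}\rangle\to\langle \bvec u^\ast,\bvec y-\bvec x^\ast\rangle$. The function-value term converges by the standing hypothesis $g_1(\bvec x^{(t)})\to g_1(\bvec x^\ast)$. Passing to the limit therefore yields
\begin{align*}
g_1(\bvec y)\ge g_1(\bvec x^\ast)+\langle \bvec u^\ast,\bvec y-\bvec x^\ast\rangle ,
\end{align*}
and since $\bvec y$ was arbitrary, this is the subgradient inequality at $\bvec x^\ast$, giving $\bvec u^\ast\in\partial g_1(\bvec x^\ast)$.

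The point requiring care — rather than a genuine obstacle — is the role of the hypothesis $g_1(\bvec x^{(t)})\to g_1(\bvec x^\ast)$. Because $g_1$ is only proper, lower semicontinuous, and convex by Assumption~\ref{assump_composite}(b), and is \emph{not} assumed continuous, one cannot weaken this to the bare convergence $\bvec x^{(t)}\to\bvec x^\ast$: lower semicontinuity alone yields only $\liminf_{t}g_1(\bvec x^{(t)})\ge g_1(\bvec x^\ast)$, which is the wrong direction to preserve the ``$\ge$'' after passing to the limit. The explicit convergence of function values is exactly what repairs this gap, and it is precisely the assumption furnished in the intended applications (where the iterates produced by the algorithm are shown to satisfy it). Convexity is used only through the defining inequality, so no structure of $g_1$ beyond that listed in Assumption~\ref{assump_composite}(b) is needed.
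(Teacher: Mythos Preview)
Your argument is correct: fixing a test point, passing to the limit in the subgradient inequality using the assumed convergence of $g_1(\bvec x^{(t)})$ together with the continuity of the inner product yields exactly the subgradient inequality at $(\bvec x^\ast,\bvec u^\ast)$, and your remark on why mere lower semicontinuity of $g_1$ is insufficient is to the point.

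As for comparison with the paper: the paper does \emph{not} give its own proof of this proposition. It simply cites the result from \cite{rockafellar2009variational} and from the supplemental material of \cite{li2015accelerated} and uses it as a black box in the proofs of Theorems~\ref{theo:PDCA_conv} and~\ref{theo:nmPDCA_conv}. Your short self-contained derivation is therefore an addition rather than an alternative; it is the standard closedness argument for the graph of the convex subdifferential relative to sequences along which function values converge, and nothing more elaborate is needed here.
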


Now we are ready to prove Theorem~\ref{theo:PDCA_conv}.
It follows from~\eqref{eq:backtracking} that the sequence $\{F(\bvec x^{(t)})\}$ is 
nonincreasing.
This, together with Assumption~\ref{assump_composite} (c), implies that $\lim_{t\to\infty}F(\bvec x^{(t)})$ exists.
Thus, 
by taking limits on both sides of~\eqref{eq:backtracking}, we have
\begin{align}\label{limit}
\lim_{t\to \infty}\|\bvec x^{(t+1)}-\bvec x^{(t)}\|_2=0.
\end{align}
In addition, from Assumption~\ref{assump_composite} (c), the sequence $\{\bvec x^{(t)}\}$ is bounded.
Therefore, $\{\bvec x^{(t)}\}$ is a converging sequence, whose limit is denoted by $\bvec x^\ast$.

From the optimality condition of \eqref{eq:PDCAbtsubprob}, we have
\begin{align*}
\bvec 0\in \nabla f(\bvec x^{(t)})+l^{(t)}(\bvec x^{(t+1)}-\bvec x^{(t)})+\partial g_1(\bvec x^{(t+1)})-\bvec s(\bvec x^{(t)}),
\end{align*}
which is equivalent to
\begin{align}\label{eq:optcondx}
-\nabla f(\bvec x^{(t)})-l^{(t)}(\bvec x^{(t+1)}-\bvec x^{(t)})+\bvec s(\bvec x^{(t)})\in \partial g_1(\bvec x^{(t+1)}).
\end{align}
Since the sequence $\{\bvec s(\bvec x^{(t)})\}$ is bounded due to the continuity and convexity of $g_2$ and the boundedness of $\{\bvec x^{(t)}\}$,
there exists a subsequence $\mathcal T$ such that
$\bvec s^\ast :=\lim_{t\in\mathcal T\to\infty}\bvec s(\bvec x^{(t)})$ exists.
Note that $\bvec s^\ast\in\partial g_2(\bvec x^\ast)$ due to the closedness of $\partial g_2$.

Now we consider the sequence $\{-\nabla f(\bvec x^{(t)})-l^{(t)}(\bvec x^{(t+1)}-\bvec x^{(t)})+\bvec s(\bvec x^{(t)})\}$ of the left-hand side of~\eqref{eq:optcondx}.
From the continuity of $\nabla f$, the subsequential convergence of $\bvec s(\bvec x^{(t)})$ with respect to $\mathcal T$, \eqref{limit}, and Lemma~\ref{lem:boundl}, we have
\begin{align}\label{eq:limoptcond}
\lim_{t\in\mathcal T\to \infty}\left(-\nabla f(\bvec x^{(t)})-l^{(t)}(\bvec x^{(t+1)}-\bvec x^{(t)})+\bvec s(\bvec x^{(t)})\right)=-\nabla f(\bvec x^\ast)+\bvec s^\ast.
\end{align}

Then we will show that $g_1(\bvec x^{(t)})\to g_1(\bvec x^{\ast})$ to apply Proposition~\ref{prop:semicontinuity}.
We have from~\eqref{eq:PDCAbtsubprob} that
\begin{align}
\langle \nabla f(\bvec x^{(t)})-\bvec s(\bvec x^{(t)}),\bvec x^{(t+1)} \rangle+\frac{l^{(t)}}{2}\|\bvec x^{(t+1)}-\bvec x^{(t)}\|_2^2+g_1(\bvec x^{(t+1)})\nonumber\\
\le \langle \nabla f(\bvec x^{(t)})-\bvec s(\bvec x^{(t)}),\bvec x^{\ast} \rangle+\frac{l^{(t)}}{2}\|\bvec x^{\ast}-\bvec x^{(t)}\|_2^2+g_1(\bvec x^{\ast}).\label{eq:minimizerx}
\end{align}
Using \eqref{eq:minimizerx}, Lemma~\ref{lem:boundl}, the convergence of $\{\bvec x^{(t)}\}$, and the boundedness of $\{\nabla f(\bvec x^{(t)}) \}$ and $\{\bvec s(\bvec x^{(t)})\},$ we have
\begin{align*}
\limsup_{t\to\infty}g_1(\bvec x^{(t+1)})\le g_1(\bvec x^{\ast}).
\end{align*}
Since $g_1$ is lower semicontinuous, i.e.,
\begin{align*}
\liminf_{t\to\infty}g_1(\bvec x^{(t+1)})\ge g_1(\bvec x^{\ast}),
\end{align*}
we have
\begin{align}
\lim_{t\to\infty}g_1(\bvec x^{(t+1)})= g_1(\bvec x^{\ast}).\label{eq:limg1x}
\end{align}
Finally, using \eqref{eq:limg1x}, \eqref{eq:limoptcond}, and Proposition~\ref{prop:semicontinuity} for \eqref{eq:optcondx}, we have
\begin{align*}
-\nabla f(\bvec x^\ast)+\bvec s^\ast\in \partial g_1(\bvec x^\ast),
\end{align*}
which implies
\begin{align}
\bvec 0&\in \{\nabla f(\bvec x^{\ast})\}+\partial g_1(\bvec x^{\ast})-\{\bvec s^{\ast}\}\subseteq \{\nabla f(\bvec x^{\ast})\}+\partial g_1(\bvec x^{\ast})-\partial g_2(\bvec x^{\ast}).\label{eq:stationary}
\end{align}
\hfill (End of Proof of Theorem \ref{theo:PDCA_conv})

\subsection{Proof of Theorem \ref{theo:nmPDCA_conv}} 
\label{sec:proof:nmPDCA_conv}

In the same manner to the proof of Theorem 4 in~\cite{li2015accelerated}, $\{\bvec x^{(t)}\}$ and $\{\bvec v^{(t)}\}$ can be proven to be bounded and
\begin{align}
\sum_{t_1\in\Omega_1}\|\bvec x^{(t_1+1)}-\bvec y^{(t_1)}\|_2^2+\sum_{t_2\in\Omega_2}\|\bvec v^{(t_2+1)}-\bvec x^{(t_2)}\|_2^2\le \frac{c^{(1)}-F^\ast}{\delta(1-\eta)}<\infty,\label{eq:thm4inNIPS}
\end{align}
where $F^\ast$ 
denotes the optimal value of $F(\bvec x)$.
We consider the following three cases.

Case 1: $\Omega_2$ is finite.
In this case, there exists $T$ such that \eqref{eq:nmcriteria} is satisfied for all $t>T$.
Thus we have from~\eqref{eq:thm4inNIPS} that
\begin{align}
\sum_{t=T}^\infty\|\bvec x^{(t+1)}-\bvec y^{(t)}\|_2^2<\infty,\ \|\bvec x^{(t+1)}-\bvec y^{(t)}\|_2^2\to 0. \label{eq:xt+1-yt}
\end{align}
Since $\{\bvec x^{(t)}\}$ is bounded, we have that $\{\bvec y^{(t)}\}$ is bounded and thus has accumulation points, one of which is denoted by $\bvec y^\ast$, i.e., there exists a subsequence $\mathcal T$ such that
\begin{align*}
\lim_{t\in \mathcal T\to \infty}\bvec y^{(t)}=\bvec y^{\ast}.
\end{align*}
Then from~\eqref{eq:xt+1-yt}, we have $\bvec x^{(t+1)}\to \bvec y^\ast$ as $t\in \mathcal T\to \infty$.
From the optimality condition of~\eqref{eq:computez} and $\bvec x^{(t+1)}=\bvec z^{(t+1)}$,
\begin{align*}
\bvec 0\in \{\nabla f(\bvec y^{(t)})+l_y^{(t)}(\bvec x^{(t+1)}-\bvec y^{(t)})\}+\partial g_1(\bvec x^{(t+1)})-\{\bvec s(\bvec y^{(t)})\}.
\end{align*}
which is equivalent to
\begin{align}\label{eq:optcondv}
-\nabla f(\bvec y^{(t)})-l_y^{(t)}(\bvec x^{(t+1)}-\bvec y^{(t)})+\bvec s(\bvec y^{(t)})\in \partial g_1(\bvec x^{(t+1)}).
\end{align}
Now similarly to the proof of Theorem~\ref{theo:PDCA_conv}, we can take a subsequence $\mathcal T'$ of $\mathcal T$ such that
$\bvec s^\ast :=\lim_{t\in\mathcal T'\to\infty}\bvec s(\bvec y^{(t)})$ exists and $\bvec s^\ast\in\partial g_2(\bvec y^\ast)$.
Using this fact, in the same way as the derivation of~\eqref{eq:stationary}, we have
\begin{align*}
\bvec 0\in \{\nabla f(\bvec y^{\ast})\}+\partial g_1(\bvec y^{\ast})-\partial g_2(\bvec y^{\ast}).
\end{align*}
Since $\|\bvec x^{(t+1)}-\bvec y^{(t)}\|_2^2\to 0$, $\{\bvec x^{(t)}\}$ and $\{\bvec y^{(t)}\}$ have the same accumulation points and thus
\begin{align*}
\bvec 0\in \{\nabla f(\bvec x^{\ast})\}+\partial g_1(\bvec x^{\ast})-\partial g_2(\bvec x^{\ast}).
\end{align*}

Case 2: $\Omega_1$ is finite.
In this case, there exists $T$ such that \eqref{eq:nmcriteria} is not satisfied for all $t>T$.
Thus we have from~\eqref{eq:thm4inNIPS} that
\begin{align}
\sum_{t=T}^\infty\|\bvec v^{(t+1)}-\bvec x^{(t)}\|_2^2<\infty,\ \|\bvec v^{(t+1)}-\bvec x^{(t)}\|_2^2\to 0.\label{eq:vt+1-xt}
\end{align}
Then similarly to Case 1, for any accumulation point $\bvec x^\ast$ of $\{\bvec x^{(t)}\}$, we have
\begin{align*}
\bvec 0\in \{\nabla f(\bvec x^{\ast})\}+\partial g_1(\bvec x^{\ast})-\partial g_2(\bvec x^{\ast}).
\end{align*}

Case 3: $\Omega_1$ and $\Omega_2$ are both infinite.
In this case, we have
\begin{align*}
\|\bvec x^{(t_1+1)}-\bvec y^{(t_1)}\|_2^2\to 0,\ \|\bvec v^{(t_2+1)}-\bvec x^{(t_2)}\|_2^2\to 0,
\end{align*}
where $t_1\in\Omega_1$ and $t_2\in\Omega_2$.
Since $\{\bvec x^{(t)}\}$ is bounded, $\{\bvec y^{(t_1)}\}_{t_1\in\Omega_1}$ is also bounded.
Now similarly to Cases 1 and 2, any accumulation point $\bvec y^\ast$ of $\{\bvec y^{(t_1)}\}_{t_1\in\Omega_1}$ and any accumulation point $\bvec x^\ast$ of $\{\bvec x^{(n_i)}\}_{n_i\in\Omega_2}$ are stationary points of \eqref{eq:DCreg}.
In addition, $\{\bvec x^{(t_1+1)}\}_{t_1\in\Omega_1}$ and $\{\bvec y^{(t_1)}\}_{t_1\in\Omega_1}$ have the same accumulation point and thus any accumulation point $\bvec x^\ast$ of $\{\bvec x^{(t_1+1)}\}_{t_1\in\Omega_1}$ is also a stationary points of \eqref{eq:DCreg}.
Similarly, any accumulation point $\bvec v^\ast$ of $\{\bvec v^{(t_2+1)}\}_{t_2\in\Omega_2}$ is a stationary points of \eqref{eq:DCreg}.

\hfill (End of Proof of Theorem \ref{theo:nmPDCA_conv})

\bibliography{const_sparse}
\end{document}